\theoremstyle{plain}
 \newtheorem{thm}{Theorem}[section]
 \newtheorem{cor}[thm]{Corollary}
 \newtheorem{lem}[thm]{Lemma}
 \newtheorem{prop}[thm]{Proposition}
\theoremstyle{definition}
 \newtheorem{ex}{Example}[section]
\theoremstyle{remark}
 \newtheorem{rem}{Remark}[section]
\begin{document}
\title[On normalizers of $C^{*}$-subalgebras in the 
Cuntz algebra $\mathcal{O}_{n}$]
{On normalizers of $C^{*}$-subalgebras in the 
Cuntz algebra $\mathcal{O}_{n}$}
\author[Tomohiro Hayashi]{{Tomohiro Hayashi} }
\address[Tomohiro Hayashi]
{Nagoya Institute of Technology, 
Gokiso-cho, Showa-ku, Nagoya, Aichi, 466-8555, Japan}
\email[Tomohiro Hayashi]{hayashi.tomohiro@nitech.ac.jp}
\baselineskip=17pt

\maketitle

\begin{abstract}
In this paper we investigate the normalizer 
$\mathcal{N}_{\mathcal{O}_{n}}(A)$ of a 
$C^{*}$-subalgebra $A\subset \mathcal{F}_{n}$ where 
$\mathcal{F}_{n}$ is the canonical UHF-subalgebra of type $n^{\infty}$ 
in the Cuntz algebra $\mathcal{O}_{n}$. Under the assumption 
that the relative commutant 
$A'\cap \mathcal{F}_{n}$ is finite-dimensional, we show 
several facts for normalizers of $A$. In particular it is 
shown that 
the automorphism group 
$\{{\rm Ad}u|_{A}\ \ |\ u\in \mathcal{N}_{\mathcal{F}_{n}}(A)\}$ 
has a finite index in 
$\{{\rm Ad}U|_{A}\ \ |\ U\in \mathcal{N}_{\mathcal{O}_{n}}(A)\}$.

\end{abstract}

\section{Introduction}
The purpose of this paper is to investigate the normalizer 
of $C^{*}$-subalgebras in the Cuntz algebra 
$\mathcal{O}_{n}$~\cite{C}. 
Let $\mathcal{F}_{n}$ be the canonical UHFsubalgebra of 
$\mathcal{O}_{n}$. In the paper~\cite{CHS}, 
it is shown that the normalizer group 
$\mathcal{N}_{\mathcal{O}_{n}}(\mathcal{F}_{n})$ is a 
subset of $\mathcal{F}_{n}$. 
(In~\cite{CHS}, more general results are shown.) 
More generally, if $A$ is an irreducible 
$C^{*}$-subalgebra of $\mathcal{O}_{n}$, then 
the normilizers 
$\mathcal{N}_{\mathcal{O}_{n}}(A)$ is a subset of $\mathcal{F}_{n}$. 
In this paper we investigate the normalizer 
$\mathcal{N}_{\mathcal{O}_{n}}(A)$ where $A$ is a 
$C^{*}$-subalgebra of $\mathcal{F}_{n}$ with a 
finite-dimensional relative commutant in $\mathcal{F}_{n}$. 
In this setting the normalizer $\mathcal{N}_{\mathcal{O}_{n}}(A)$ is 
not a subset of $\mathcal{F}_{n}$ in general. 
However we can show that the inner automorphism group induced by 
the elements in $\mathcal{N}_{\mathcal{F}_{n}}(A)$ has a 
finite index in the inner automorphism group induced by 
the elements in $\mathcal{N}_{\mathcal{O}_{n}}(A)$.  
In order to show this fact, we show that 
the relative commutant 
$A'\cap \mathcal{O}_{n}$ is also finite-dimensional. 
As a corollary of our investigation, it is shown 
that the irreducibility $A'\cap \mathcal{F}_{n}={\Bbb C}$ 
implies that 
$A'\cap \mathcal{O}_{n}={\Bbb C}$. Hence in this case 
the normalizer group 
$\mathcal{N}_{\mathcal{O}_{n}}(A)$ is a subset of $\mathcal{F}_{n}$.

We would like to explain the motivation of this paper. 
There is a one-to-one correspondence 
between all unitaries $\mathcal{U}(\mathcal{O}_{n})$ and 
all endomorphisms ${\rm End}(\mathcal{O}_{n})$ 
such that 
$$
\mathcal{U}(\mathcal{O}_{n})\ni u\mapsto 
\lambda_{u}\in {\rm End}(\mathcal{O}_{n})
$$
where $\lambda_{u}$ is defined by 
$\lambda_{u}(S_{i})=uS_{i}$. The endomorphism $\lambda_{u}$ 
is called {\it localized} if the corresponding unitary is a 
matrix in the UHF-algebra $\mathcal{F}_{n}$
~\cite{CP,CF}. 
In the 
paper~\cite{S} Szymanski showed that 
the localized endomorphism $\lambda_{u}$ is 
an inner automorphism if and only if $u$ can be written 
in some special form. As a consequence, if the 
localized endomorphism $\lambda_{u}$ is 
an inner automorphism, then 
there exists a unitary $U\in \mathcal{F}_{n}$ 
such that $\lambda_{u}={\rm Ad}U$. Keeping this in mind, 
we would like to consider the following problem. Let 
$\lambda_{u}$ and $\lambda_{v}$ be two localized endomorphisms. 
If they satisfy ${\rm Ad}U\circ\lambda_{u}=\lambda_{v}$, 
what can we say about $U$? Can we determine such a unitary $U$? 
Unfortunately in this paper 
we cannot say nothing about this problem. But we 
remark that a localized endomorphism has finite index~\cite{CP,L}. 
Therefore 
the $C^{*}$-algebras 
$\lambda_{u}(\mathcal{F}_{n})'\cap\mathcal{F}_{n}$ and 
$\lambda_{u}(\mathcal{O}_{n})'\cap\mathcal{O}_{n}$ are 
finite-dimensional. So we expect that our investigation 
would be helpful on this problem in the future. 

The author wishes to express his hearty gratitude to Professor 
Wojciech Szymanski 
for valuable comments and discussion on this 
paper. 
The author is also grateful to Professor 
Roberto Conti 
for valuable comments. 
The author would like to thank Professor 
Takeshi Katsura for useful advice 
and comments. 

\section{Main Results}
The Cuntz algebra $\mathcal{O}_{n}$ is the $C^{*}$-algebra 
generated by isometries 
$S_{1},\dots,S_{n}$ 
satisfying $\sum_{i=1}^{n}S_{i}{S_{i}}^{*}=1$. 
The gauge action $\gamma_{z}$ ($z\in {\Bbb T}$) on 
$\mathcal{O}_{n}$ is defined by $\gamma_{z}(S_{i})=zS_{i}$. 
Let 
$\mathcal{F}_{n}$ be the fixed point algebra of the gauge action. 
This algebra is isomorphic to the UHF-algebra of type 
$n^{\infty}$. So $\mathcal{F}_{n}$ has the unique tracial state 
$\tau$. 
We have a conditional expectation 
$E:\mathcal{O}_{n}\rightarrow \mathcal{F}_{n}$ defined by 
$$
E(x)=\int_{\Bbb T}\gamma_{z}(x)dz.
$$
The canonical shift $\varphi$ is defined by 
$\varphi(x)=\sum_{i=1}^{n}S_{i}xS_{i}^{*}$. 
It is easy to see that 
$S_{i}x=\varphi(x)S_{i}$ and $x{S_{i}}^{*}={S_{i}}^{*}\varphi(x)$. 
For each $x\in \mathcal{O}_{n}$, we have the Fourier expansion 
$$
x=\sum_{k=1}^{\infty}{S_{1}^{*}}^{k}x_{-k}+x_{0}+
\sum_{k=1}^{\infty}x_{k}S_{1}^{k}
$$ 
where $x_{k}=E(x{S_{1}^{*}}^{k}),\ x_{-k}=E(S_{1}^{k}x)$ 
and $x_{0}=E(x)$. 
(The right-hand side converges in the Hilbert space 
generated by the GNS-representation 
with respect to $\tau\circ E$.) 
For example, if $x=S_{1}S_{2}S_{5}^{*}{S_{8}^{*}}^{2}S_{3}^{*}$, then 
$
S_{1}^{2}x\in \mathcal{F}_{n}
$ 
and 
$
x
={S_{1}^{*}}^{2}(S_{1}^{2}x)
={S_{1}^{*}}^{2}E(S_{1}^{2}x).
$

For the inclusion of $C^{*}$-algebras $A\subset B$, the normalizer group 
is defined by 
$$
\mathcal{N}_{B}(A)=
\{u\in B\ |\ uAu^{*}=A,\ uu^{*}=u^{*}u=1\}.
$$
For a unitary operator $u$, we define the inner automorphism 
by ${\rm Ad}u(x)=uxu^{*}$. We denote by 
${\rm Ad}u|_{A}$ the restriction 
of ${\rm Ad}u$ on $A$.

\bigskip

The following two theorems are the main results 
of this paper. 

\begin{thm}
Let $A$ be a $C^{*}$-subalgebra of $\mathcal{F}_{n}$. 
If the relative commutant $A'\cap \mathcal{F}_{n}$ is 
finite-dimensional, then the algebra $A'\cap \mathcal{O}_{n}$ 
is also finite-dimensional.
\end{thm}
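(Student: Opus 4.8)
The plan is to exploit the Fourier expansion together with the canonical shift to control how elements of $A'\cap\mathcal{O}_{n}$ decompose, and then to bound each ``Fourier coefficient space'' by a finite-dimensional object built from $A'\cap\mathcal{F}_{n}$.

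\medskip

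\noindent\textbf{Step 1: Reduce to the Fourier coefficients.} Let $x\in A'\cap\mathcal{O}_{n}$ and write its Fourier expansion $x=\sum_{k\ge 1}{S_1^*}^{k}x_{-k}+x_0+\sum_{k\ge 1}x_kS_1^{k}$ with $x_k=E(x{S_1^*}^{k})$, $x_{-k}=E(S_1^{k}x)$, $x_0=E(x)\in\mathcal{F}_{n}$. First I would check that $x\in A'$ forces each coefficient to satisfy a twisted commutation relation with $A$: using $aS_1=\varphi(a)S_1$ and $S_1^{*}a={S_1^*}\varphi(a)$ (and more generally the identities with $S_1^{k}$), the condition $ax=xa$ for all $a\in A$ translates into $ax_k=x_k\varphi^{k}(a)$ and $\varphi^{k}(a)x_{-k}=x_{-k}a$ for all $a\in A$, $k\ge 0$. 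In particular $x_0\in A'\cap\mathcal{F}_{n}$, which is finite-dimensional by hypothesis.

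\medskip

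\noindent\textbf{Step 2: Bound the space of $k$-th coefficients.} Fix $k\ge 1$ and let $X_k=\{\,y\in\mathcal{F}_{n}\ :\ ay=y\varphi^{k}(a)\ \forall a\in A\,\}$; this is a norm-closed subspace (a bimodule for the pair $(A,\varphi^{k}(A))$). The key point is that $X_k^{*}X_k\subset \varphi^{k}(A)'\cap\mathcal{F}_{n}$ and $X_kX_k^{*}\subset A'\cap\mathcal{F}_{n}$. Since $\varphi^{k}$ is an injective $*$-endomorphism of $\mathcal{F}_{n}$, one has $\varphi^{k}(A)'\cap\mathcal{F}_{n}\supset\varphi^{k}(A'\cap\mathcal{F}_{n})$ but more importantly $\varphi^{k}(A)'\cap\mathcal{F}_{n}$ is itself finite-dimensional: indeed $\mathcal{F}_{n}$ is, up to the shift, of the form $M_{n^{k}}\otimes\mathcal{F}_{n}$ with $\varphi^{k}(\mathcal{F}_{n})=1\otimes\mathcal{F}_{n}$, so $\varphi^{k}(A)'\cap\mathcal{F}_{n}=M_{n^{k}}\otimes(A'\cap\mathcal{F}_{n})$, which is finite-dimensional. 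Now $X_k$, being a right Hilbert module over the finite-dimensional algebra $\varphi^{k}(A)'\cap\mathcal{F}_{n}$ with $\mathcal{F}_{n}$-valued (in fact $A'\cap\mathcal{F}_{n}$-valued) inner product $\langle y,y'\rangle=y^{*}y'$, must itself be finite-dimensional: a closed module over a finite-dimensional $C^{*}$-algebra whose ``left supports'' lie in another finite-dimensional algebra has dimension at most $\dim(A'\cap\mathcal{F}_{n})\cdot\dim(\varphi^{k}(A)'\cap\mathcal{F}_{n})$. (Concretely, decompose $1$ into finitely many minimal projections of these algebras and observe each corner of $X_k$ is at most one-dimensional by a Schur-type argument.)

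\medskip

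\noindent\textbf{Step 3: Kill all but finitely many coefficients, and assemble.} The surviving obstacle is that a priori infinitely many $X_k$ could be nonzero, so I must show $x_k=0$ for $k$ large. Here I would use that $X_kX_k^{*}\subset A'\cap\mathcal{F}_{n}$ is a finite-dimensional algebra, hence contains only finitely many projections; pairing this with a trace/index estimate — the conditional expectation $E_A$ onto $A$ (or onto $A'\cap\mathcal{F}_{n}$) together with $\tau$ — should force $\tau(x_kx_k^{*})\to 0$ and in fact vanish for $k>N$ for a uniform $N$ depending only on $\dim(A'\cap\mathcal{F}_{n})$, because the ``dimension''/trace carried by $\varphi^{k}(A)$ relative to $A$ grows while the available bimodule shrinks. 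Once only finitely many coefficients $x_{-N},\dots,x_{N}$ can be nonzero and each lives in the finite-dimensional space $X_{|k|}$ (resp.\ its adjoint), the map $x\mapsto(x_{-N},\dots,x_N)$ is an injective linear map from $A'\cap\mathcal{O}_{n}$ into a finite-dimensional vector space, so $A'\cap\mathcal{O}_{n}$ is finite-dimensional. I expect Step 3 — proving the uniform cutoff $N$ on the Fourier length — to be the genuine difficulty; Steps 1 and 2 are essentially bookkeeping with the shift and standard finite-dimensional $C^{*}$-module facts.
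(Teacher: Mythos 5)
Your Steps 1 and 2 are correct and essentially reproduce the paper's Lemmas 2.3, 2.6 and 2.7: the twisted intertwining relations $ax_{k}=x_{k}\varphi^{k}(a)$, $x_{-k}a=\varphi^{k}(a)x_{-k}$, and the finite-dimensionality of the space of possible $k$-th coefficients. (The paper packages your Schur-type corner argument as an embedding of that space into $\pi_{k}(A)'\cap M_{2}(\mathcal{F}_{n})$ with $\pi_{k}=\mathrm{id}\oplus\varphi^{k}$, whose two corners are $A'\cap\mathcal{F}_{n}$ and $M_{n^{k}}\otimes(A'\cap\mathcal{F}_{n})$; the content is the same.)

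The genuine gap is exactly where you locate it, in Step 3, and the mechanism you propose there does not work. It is not true in general that the bimodule $X_{k}$ ``shrinks'' or vanishes for large $k$, and there need not be a conditional expectation onto $A$; moreover, even if you could show $\tau(x_{k}x_{k}^{*})\to 0$, that would not force $x_{k}=0$, since $x_{k}x_{k}^{*}$ is a positive element of $A'\cap\mathcal{F}_{n}$, not a projection, and can be a small multiple of a projection. The missing idea is a Schwarz inequality for $E$ applied to the specific coefficient $x_{k}=E(XS_{1}^{*k})$:
$$
x_{k}^{*}x_{k}=E(XS_{1}^{*k})^{*}E(XS_{1}^{*k})\leq E(S_{1}^{k}X^{*}XS_{1}^{*k})\leq \|X\|^{2}\,S_{1}^{k}S_{1}^{*k}.
$$
Taking the polar decomposition $x_{k}=v_{k}|x_{k}|$ in $\mathcal{F}_{n}''$, this places the left support $v_{k}^{*}v_{k}$ under $S_{1}^{k}S_{1}^{*k}$, which has trace $n^{-k}$, while the right support $v_{k}v_{k}^{*}$ is a projection in the finite-dimensional algebra $A'\cap\mathcal{F}_{n}$ (because $x_{k}x_{k}^{*}$ lies there), so if nonzero it has trace at least some fixed $c>0$. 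Since $\tau(v_{k}v_{k}^{*})=\tau(v_{k}^{*}v_{k})\leq n^{-k}$, one gets $x_{k}=0$ as soon as $n^{-k}<c$; note the cutoff $N$ depends on $c$, not merely on $\dim(A'\cap\mathcal{F}_{n})$. With this inequality supplied, your assembly in Step 3 (injectivity of $X\mapsto(x_{-N},\dots,x_{N})$ into a finite-dimensional space) completes the proof exactly as in the paper.
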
 

\begin{thm} 
Let $A$ be as above. 
We consider two subgroups of the automorphism group 
${\rm Aut}(A)$ as follows. 
$$
G=\{{\rm Ad}U|_{A}\ \ |\ U\in \mathcal{N}_{\mathcal{O}_{n}}(A)\},\ \ 
H=\{{\rm Ad}u|_{A}\ \ |\ u\in \mathcal{N}_{\mathcal{F}_{n}}(A)\}.
$$
Then $H$ is a subgroup of $G$ with finite index. 
\end{thm}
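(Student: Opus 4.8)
The plan is the following. Write $B=A'\cap\mathcal{O}_n$; by the preceding theorem $B$ is finite-dimensional, and this is the only place the hypothesis on $A'\cap\mathcal{F}_n$ is used. The containment $H\subseteq G$ is clear, since $\mathcal{N}_{\mathcal{F}_n}(A)\subseteq\mathcal{N}_{\mathcal{O}_n}(A)$ and $U\mapsto\mathrm{Ad}\,U|_A$ is a homomorphism. Each $U\in\mathcal{N}_{\mathcal{O}_n}(A)$ also normalizes $B$, because $UBU^{*}=(UAU^{*})'\cap\mathcal{O}_n=B$, so $U\mapsto\mathrm{Ad}\,U|_B$ is a homomorphism $\mathcal{N}_{\mathcal{O}_n}(A)\to\mathrm{Aut}(B)$; composing with $\mathrm{Aut}(B)\to\mathrm{Out}(B)$ kills $\{U:\mathrm{Ad}\,U|_A=\mathrm{id}\}=\mathcal{U}(A'\cap\mathcal{O}_n)=\mathcal{U}(B)$ (for such $U$, $\mathrm{Ad}\,U|_B$ is inner), so it descends to a homomorphism $\hat\Psi\colon G\to\mathrm{Out}(B)$. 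Since $B$ is finite-dimensional, $\mathrm{Out}(B)$ is finite, hence $G_0:=\ker\hat\Psi$ has finite index in $G$, and it suffices to show $G_0\subseteq H$. Given $\alpha=\mathrm{Ad}\,U|_A\in G_0$, choose $b\in\mathcal{U}(B)$ with $\mathrm{Ad}\,U|_B=\mathrm{Ad}\,b|_B$; replacing $U$ by $Ub^{*}$ — which leaves both $\mathcal{N}_{\mathcal{O}_n}(A)$-membership and the automorphism $\alpha$ unchanged, as $b\in A'$ — we may assume $U$ commutes with $B$. So the crux is: \emph{if $U\in\mathcal{N}_{\mathcal{O}_n}(A)$ commutes with $B$, then $U\in\mathcal{F}_n$}, for this gives $U\in\mathcal{N}_{\mathcal{F}_n}(A)$ and hence $\alpha\in H$.

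To prove the crux I use the gauge action. For $z\in\mathbb{T}$ set $c_U(z):=U^{*}\gamma_z(U)$. Since $\gamma_z$ fixes $A\subseteq\mathcal{F}_n$ pointwise and $UAU^{*}=A$, each $c_U(z)$ commutes with $A$, so $c_U(z)\in\mathcal{U}(B)$; a similar computation using $U\in B'$ shows $c_U(z)$ commutes with $B$ as well, whence $c_U(z)\in\mathcal{U}(Z(B))$. As $\gamma$ is a norm-continuous action of the connected group $\mathbb{T}$ on the finite-dimensional abelian algebra $Z(B)$, it fixes $Z(B)$ pointwise; thus $Z(B)\subseteq\mathcal{F}_n$ and the cocycle relation $c_U(zw)=c_U(z)\gamma_z(c_U(w))$ collapses to $c_U(zw)=c_U(z)c_U(w)$. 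Hence $z\mapsto c_U(z)$ is a continuous homomorphism into $\mathcal{U}(Z(B))$, so if $p_1,\dots,p_m$ are the minimal central projections of $B$ we get $c_U(z)=\sum_j z^{n_j}p_j$ with $n_j\in\mathbb{Z}$.

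It remains to see $n_j=0$ for all $j$. Fix $j$ and put $v:=Up_j$; since $U$ commutes with $p_j\in Z(B)\subseteq\mathcal{F}_n$, $v$ is a partial isometry with $v^{*}v=vv^{*}=p_j$, and $\gamma_z(v)=z^{n_j}v$, so $v$ is homogeneous of degree $n_j$. If $k:=n_j\ge1$ (the case $k\le-1$ follows by passing to $v^{*}$), then $y:=vS_1^{*k}\in\mathcal{F}_n$ and $v=yS_1^{k}$, and expanding $vv^{*}=p_j$ and $v^{*}v=p_j$ shows that $w:=yS_1^{k}S_1^{*k}\in\mathcal{F}_n$ satisfies $ww^{*}=p_j$ and $w^{*}w=S_1^{k}p_jS_1^{*k}$; the latter is a projection, so $w$ is a partial isometry. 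Applying the unique trace $\tau$ of $\mathcal{F}_n$, together with the identity $\tau(S_1^{k}xS_1^{*k})=n^{-k}\tau(x)$ for $x\in\mathcal{F}_n$, gives $\tau(p_j)=\tau(ww^{*})=\tau(w^{*}w)=n^{-k}\tau(p_j)$, which is impossible since $\tau(p_j)>0$ and $n\ge2$. Hence every $n_j=0$, i.e.\ $\gamma_z(U)=U$ for all $z$, i.e.\ $U\in\mathcal{F}_n$, as required.

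The substantive step is the last one: turning the gauge cocycle of a normalizer commuting with $B$ into a genuine $Z(B)$-valued character, and then eliminating its degrees by comparing traces of projections in the UHF algebra $\mathcal{F}_n$. The reduction to $\mathrm{Out}(B)$ and its finiteness are routine once the preceding theorem supplies finite-dimensionality of $A'\cap\mathcal{O}_n$.
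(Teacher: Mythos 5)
Your proof is correct. It uses the same two technical engines as the paper --- the gauge cocycle $z\mapsto U^{*}\gamma_{z}(U)$ lands in the finite\-/dimensional algebra $A'\cap\mathcal{O}_{n}$ (the paper's Lemma 2.10(i)), and nonzero gauge degrees are excluded by comparing $\tau(ww^{*})$ with $\tau(w^{*}w)=n^{-k}\tau(ww^{*})$ (the paper's Lemma 2.12) --- but the overall organization is genuinely different. The paper does not factor through $\mathrm{Out}(A'\cap\mathcal{O}_{n})$: it first diagonalizes the cocycle against the fixed unitary $u_{z}=\sum_{i}z^{k_{i}}e_{i}$ built from minimal projections $e_{i}\in A'\cap\mathcal{F}_{n}$ (Lemma 2.10(ii)), conjugates $U$ by unitaries $v\in A'\cap\mathcal{F}_{n}$ and $w\in A'\cap\mathcal{O}_{n}$ into $\mathcal{N}_{\mathcal{O}_{n}}(e_{1}\mathcal{F}_{n}e_{1}\oplus\cdots\oplus e_{l}\mathcal{F}_{n}e_{l})$ (Proposition 2.11), and then takes finitely many coset representatives $U_{1},\dots,U_{N}$ indexed by the induced permutation of the $e_{i}$; the finiteness of that set of permutations is exactly what your finiteness of $\mathrm{Out}(B)$ replaces. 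Your reduction to $\ker(G\to\mathrm{Out}(B))$ lets you assume from the outset that $U$ commutes with $B$, so the cocycle is automatically central, $Z(B)\subseteq\mathcal{F}_{n}$ by connectedness of $\mathbb{T}$, and the cocycle identity collapses to a character $z\mapsto\sum_{j}z^{n_{j}}p_{j}$ with no need for the simultaneous diagonalization by $w$ or the passage to the corner\-/sum algebra. What your route buys is brevity and the explicit bound $[G:H]\le|\mathrm{Out}(A'\cap\mathcal{O}_{n})|$; what the paper's route buys is the intermediate structural information about $\mathcal{N}_{\mathcal{O}_{n}}(e_{1}\mathcal{F}_{n}e_{1}\oplus\cdots\oplus e_{l}\mathcal{F}_{n}e_{l})$, which it then exploits in Examples 2.1 and 2.2. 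Both proofs rest on Theorem 2.1 in the same way, and all the steps you leave implicit (well\-/definedness of $\hat\Psi$ on $G$ via $\{U:\mathrm{Ad}\,U|_{A}=\mathrm{id}\}=\mathcal{U}(B)$, finiteness of $\mathrm{Out}$ of a finite\-/dimensional $C^{*}$\-/algebra, the trace identity $\tau(S_{1}^{k}xS_{1}^{*k})=n^{-k}\tau(x)$) check out.
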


We need some preparations to show these theorems. 

\begin{lem}
For $X\in A'\cap \mathcal{O}_{n}$, we set 
$x_{k}=E(X{S_{1}^{*}}^{k})$ and $x_{-k}=E(S_{1}^{k}X)$. 
Then for any $a\in A$ we have 
$ax_{k}=x_{k}\varphi^{k}(a)$, $x_{-k}a=\varphi^{k}(a)x_{-k}$ 
and 
$x_{k}x_{k}^{*},\ x_{-k}^{*}x_{-k}\in A'\cap \mathcal{F}_{n}$
\end{lem}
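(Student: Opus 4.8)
The plan is to reduce everything to two facts already available in this setting: the conditional expectation $E\colon\mathcal{O}_{n}\to\mathcal{F}_{n}$ is an $\mathcal{F}_{n}$-bimodule map (immediate from $E(x)=\int_{\Bbb T}\gamma_{z}(x)\,dz$ and the fact that $\gamma_{z}$ fixes $\mathcal{F}_{n}$ pointwise), and the canonical shift satisfies $\varphi^{k}(\mathcal{F}_{n})\subset\mathcal{F}_{n}$ together with the intertwining relations $S_{1}^{k}a=\varphi^{k}(a)S_{1}^{k}$ and $a\,{S_{1}^{*}}^{k}={S_{1}^{*}}^{k}\varphi^{k}(a)$. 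Since $A\subset\mathcal{F}_{n}$, every $a\in A$ is fixed by $\gamma_{z}$, so these relations apply with $a\in A$, and $\varphi^{k}(a)$ again lies in $\mathcal{F}_{n}$ so that the module property of $E$ may be invoked a second time.

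First I would establish the intertwining identity for $x_{k}$. For $a\in A$ write $a x_{k}=a\,E(X{S_{1}^{*}}^{k})=E\bigl(aX{S_{1}^{*}}^{k}\bigr)$ by the left module property of $E$; then use $aX=Xa$ (because $X\in A'\cap\mathcal{O}_{n}$) and $a\,{S_{1}^{*}}^{k}={S_{1}^{*}}^{k}\varphi^{k}(a)$ to rewrite this as $E\bigl(X{S_{1}^{*}}^{k}\varphi^{k}(a)\bigr)$, and finally pull $\varphi^{k}(a)\in\mathcal{F}_{n}$ out on the right to obtain $x_{k}\varphi^{k}(a)$. The identity $x_{-k}a=\varphi^{k}(a)x_{-k}$ is obtained symmetrically, starting from $x_{-k}a=E(S_{1}^{k}X)a=E(S_{1}^{k}Xa)=E(S_{1}^{k}aX)=E\bigl(\varphi^{k}(a)S_{1}^{k}X\bigr)$ and pulling $\varphi^{k}(a)$ out on the left.

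For the membership statements, note that $x_{k},x_{-k}\in\mathcal{F}_{n}$ by the definition of $E$, hence $x_{k}x_{k}^{*}$ and $x_{-k}^{*}x_{-k}$ lie in $\mathcal{F}_{n}$; it remains to see that they commute with $A$. Applying the first identity to $a^{*}$ and taking adjoints gives $x_{k}^{*}a=\varphi^{k}(a)x_{k}^{*}$, so for $a\in A$ one computes $a\,x_{k}x_{k}^{*}=x_{k}\varphi^{k}(a)x_{k}^{*}=x_{k}x_{k}^{*}a$; similarly, applying the second identity to $a^{*}$ and taking adjoints gives $a\,x_{-k}^{*}=x_{-k}^{*}\varphi^{k}(a)$, whence $a\,x_{-k}^{*}x_{-k}=x_{-k}^{*}\varphi^{k}(a)x_{-k}=x_{-k}^{*}x_{-k}a$. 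This yields $x_{k}x_{k}^{*},\,x_{-k}^{*}x_{-k}\in A'\cap\mathcal{F}_{n}$.

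I do not expect a genuine obstacle here: the argument is entirely formal once the right relations are lined up. The only points requiring a little care are verifying that $E$ is a bimodule map over $\mathcal{F}_{n}$ (not merely over $\Bbb C$) and that $\varphi^{k}(a)$ remains in $\mathcal{F}_{n}$, so that $\varphi^{k}(a)$ can be extracted through $E$ from the appropriate side; both are routine consequences of the definitions of $E$, $\gamma_{z}$, and $\varphi$.
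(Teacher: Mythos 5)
Your proof is correct and follows essentially the same route as the paper: use the $\mathcal{F}_{n}$-bimodule property of $E$ together with $a{S_{1}^{*}}^{k}={S_{1}^{*}}^{k}\varphi^{k}(a)$ and $S_{1}^{k}a=\varphi^{k}(a)S_{1}^{k}$ to get the intertwining identities, then apply them to $a^{*}$ and take adjoints to see that $x_{k}x_{k}^{*}$ and $x_{-k}^{*}x_{-k}$ commute with $A$. No gaps.
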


\begin{proof}
For any $a\in A$, we see that
\begin{align*}
ax_{k}&=aE(X{S_{1}^{*}}^{k})=E(aX{S_{1}^{*}}^{k})=
E(Xa{S_{1}^{*}}^{k})\\
&=E(X{S_{1}^{*}}^{k}\varphi^{k}(a))=
E(X{S_{1}^{*}}^{k})\varphi^{k}(a)
=x_{k}\varphi^{k}(a)
\end{align*}
and therefore 
$$
x_{k}x_{k}^{*}a=x_{k}(a^{*}x_{k})^{*}
=x_{k}(x_{k}\varphi^{k}(a)^{*})^{*}
=x_{k}\varphi^{k}(a)x_{k}^{*}
=ax_{k}x_{k}^{*}.
$$
In the same way we also have 
$x_{-k}a=\varphi^{k}(a)x_{-k}$ and 
$x_{-k}^{*}x_{-k}a=ax_{-k}^{*}x_{-k}$. 
\end{proof}

\begin{lem}
There is a positive integer $N$ satisfying the following 
properties. For any integer $k\geq N$ and any element 
$X\in A'\cap \mathcal{O}_{n}$, we have 
$x_{k}=E(X{S_{1}^{*}}^{k})=0$ and $x_{-k}=E(S_{1}^{k}X)=0$.

\end{lem}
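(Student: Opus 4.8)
The plan is to exploit the structure revealed in Lemma 2.5 together with the finite-dimensionality of $A'\cap\mathcal{F}_{n}$. Fix $X\in A'\cap\mathcal{O}_{n}$ and set $x_{k}=E(X{S_{1}^{*}}^{k})$. By Lemma 2.5 the element $p_{k}:=x_{k}x_{k}^{*}$ lies in $A'\cap\mathcal{F}_{n}$, and the relation $ax_{k}=x_{k}\varphi^{k}(a)$ shows that $x_{k}$ implements a partial isometry intertwining the $A$-action with the twisted action $a\mapsto\varphi^{k}(a)$. The key observation is that $\varphi^{k}(A)$ sits deep inside $\mathcal{F}_{n}$ (in the $k$-th tensor tail), so $\varphi^{k}(a)$ commutes with the ``first $k$ tensor factors'' of $\mathcal{F}_{n}$; in particular the relative commutant of $\varphi^{k}(A)$ in $\mathcal{F}_{n}$ \emph{contains} a copy of $M_{n^{k}}(\mathbb{C})$ together with $\varphi^{k}(A'\cap\mathcal{F}_{n})$.

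First I would argue that the projections $p_{k}$, for $k$ ranging over all nonnegative integers, cannot be ``too large'' infinitely often. More precisely, I would like to show that $\tau(p_{k})\to 0$, or even that $p_{k}=0$ for large $k$. To see why something like this should hold: the map $x\mapsto E(x{S_{1}^{*}}^{k})$ is, up to the identification $x={S_1^*}^k(S_1^k x)$ on the relevant Fourier component, a compression that pushes mass out to infinity under $\varphi$; combined with the intertwining relation $x_k^* a x_k = p_k \varphi^k(a)$ restricted to $p_k \mathcal F_n p_k$... wait, more carefully: from $ax_k = x_k\varphi^k(a)$ we get $x_k^* a x_k = x_k^* x_k \varphi^k(a)$, so $q_k := x_k^* x_k \in \varphi^k(A)' \cap \mathcal F_n$ and $\mathrm{Ad}(x_k)$ gives a $*$-isomorphism from $p_k A p_k$-module structure onto $q_k \varphi^k(A) q_k$-module structure. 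The point I want is that $A$, acting on the range of $x_k$, becomes unitarily equivalent (via $x_k$) to a subalgebra of $\varphi^k(A)$ sitting inside $\mathcal F_n$ with a large commutant, and conversely. I would make this precise by considering the finite-dimensional algebra $B:=A'\cap\mathcal{F}_{n}$ and using that $p_k \in B$, so there are only finitely many possibilities for the ``central support pattern'' of $p_k$ in $B$.

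The cleanest route, and the one I would pursue, is a trace/index estimate. Since $A'\cap\mathcal F_n$ is finite-dimensional, the inclusion $A\subset\mathcal{F}_{n}$ behaves (via the trace $\tau$) like a finite-index inclusion on each piece; in particular there is a constant $c>0$ with $\tau(e) \ge c\,\tau(z(e))$ for every nonzero projection $e$ in $A$ dominated under... Actually the decisive inequality should come from: $x_k x_k^* \in A' \cap \mathcal F_n$ and $x_k^* x_k \in \varphi^k(A)'\cap\mathcal F_n$, and $\tau(x_k x_k^*) = \tau(x_k^* x_k)$; meanwhile $\sum_k \tau(x_k^* x_k) + \tau(x_0^* x_0) + \sum_k \tau(x_{-k}^* x_{-k}) \le \tau(X^* X) < \infty$ by Parseval for the Fourier expansion with respect to $\tau\circ E$. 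Hence $\tau(x_k x_k^*)\to 0$. But $x_k x_k^*$ is a projection in the \emph{fixed} finite-dimensional algebra $A'\cap\mathcal F_n$, which has only finitely many projections' trace-values and a smallest nonzero one; therefore $x_k x_k^* = 0$ for all $k$ large, uniformly in $X$ once we also bound $\tau(X^*X)$ --- and the uniformity over all $X$ is handled by noting that the threshold $N$ depends only on the minimal nonzero trace in $A'\cap\mathcal F_n$, not on $X$. The same argument applied to $S_1^k X$ (using $x_{-k}^* x_{-k} \in A'\cap\mathcal F_n$ from Lemma 2.5) gives $x_{-k}=0$ for $k\ge N$.

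The main obstacle I anticipate is making the Parseval/summability step rigorous: the Fourier expansion converges only in the GNS Hilbert space of $\tau\circ E$, so I must be careful that $\sum_k \|x_k S_1^k\|_2^2 \le \|X\|_2^2$ genuinely holds, i.e. that the components ${S_1^*}^k x_{-k}$, $x_0$, $x_k S_1^k$ are pairwise orthogonal in that Hilbert space --- this is where the gauge-grading and the relations $E(S_1^j {S_1^*}^k \cdot) $ vanishing for $j\ne k$ on $\mathcal F_n$ must be invoked. The second delicate point is the \emph{uniformity} of $N$: a priori $\tau(X^*X)$ is unbounded over the unit ball is fine, but over all of $A'\cap\mathcal O_n$ it is unbounded, so I cannot simply say ``$\tau(x_kx_k^*)<c_{\min}$''. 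The fix is to normalize --- it suffices to prove the statement for $X$ in the unit ball, or better, to observe that $x_k = E(X {S_1^*}^k)$ and that by Lemma 2.5 the range projection $x_k x_k^*$ is a projection in $A'\cap\mathcal F_n$ whose \emph{existence} (nonzero) for arbitrarily large $k$, applied to a well-chosen single $X$ (e.g. a suitable power of $S_1$ times an element of $A'\cap\mathcal F_n$), would already force $A'\cap\mathcal F_n$ to be infinite-dimensional; then a compactness/pigeonhole argument on the finitely many projections of $A'\cap\mathcal F_n$ yields the uniform $N$. I would present this last part as the technical heart of the lemma.
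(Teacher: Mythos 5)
There is a genuine gap, and it sits exactly at the point you yourself flag as ``the technical heart.'' Your decisive inequality is Parseval: $\sum_k\tau(x_k^*x_k)\le\tau(X^*X)<\infty$, hence $\tau(x_kx_k^*)\to 0$. From this you try to conclude $x_k=0$ for large $k$ by saying that $x_kx_k^*$ ``is a projection'' in the finite-dimensional algebra $A'\cap\mathcal{F}_n$ with a minimal nonzero trace value. But $x_kx_k^*$ is only a \emph{positive element} of $A'\cap\mathcal{F}_n$ (the preceding lemma gives nothing more), and a positive element of a finite-dimensional algebra can have arbitrarily small nonzero trace (e.g.\ $\varepsilon p$ for a projection $p$). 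So $\tau(x_kx_k^*)\to 0$ does not force $x_k=0$ eventually --- not even for a single fixed $X$, let alone with an $N$ uniform over all of $A'\cap\mathcal{O}_n$. Your closing paragraph acknowledges the uniformity problem but the proposed fixes (normalizing $X$, a pigeonhole on projections of $A'\cap\mathcal{F}_n$) do not repair this, because the obstruction is about passing from a trace estimate on $x_kx_k^*$ to a statement about its \emph{support}, and Parseval gives no control on the support.

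The missing idea is an operator inequality, not a summability estimate. By the Schwarz inequality for the conditional expectation,
$$
x_k^*x_k=E(X{S_1^*}^k)^*E(X{S_1^*}^k)\le E(S_1^kX^*X{S_1^*}^k)\le \|X\|^2\,S_1^k{S_1^*}^k ,
$$
so the \emph{right support projection} of $x_k$ (taken in $R=\mathcal{F}_n''$) is dominated by the projection $S_1^k{S_1^*}^k$, of trace $n^{-k}$. The \emph{left} support projection is the support of $x_kx_k^*\in A'\cap\mathcal{F}_n$, hence lies in that finite-dimensional algebra and, if nonzero, has trace at least the minimal value $c>0$. Since left and right supports have equal trace, $c\le n^{-k}$ fails once $n^{-k}<c$, forcing $x_k=0$ for all $k\ge N$ with $N$ depending only on $c$ --- the norm $\|X\|$ drops out because supports are scale-invariant. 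This single estimate delivers both the exact vanishing and the uniformity in $X$ that your Parseval route cannot reach.
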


\begin{proof}
We compute 
$$
x_{k}^{*}x_{k}=E(X{S_{1}^{*}}^{k})^{*}E(X{S_{1}^{*}}^{k})
\leq E(S_{1}^{k}X^{*}X{S_{1}^{*}}^{k})
\leq ||X||^{2}E(S_{1}^{k}{S_{1}^{*}}^{k})
=||X||^{2}S_{1}^{k}{S_{1}^{*}}^{k}.
$$
Let $R={\mathcal{F}_{n}}''$ be the hyperfinite ${\rm II}_{1}$-factor. 
We take the polar decomposition 
$x_{k}=v_{k}|x_{k}|$ in $R$. Then the above computation shows that 
$v_{k}^{*}v_{k}\leq S_{1}^{k}{S_{1}^{*}}^{k}$. 
On the other hand, since $x_{k}x_{k}^{*}$ is an element 
of the finite-dimensional $C^{*}$-algebra 
$A'\cap \mathcal{F}_{n}$, 
we have $v_{k}v_{k}^{*}\in A'\cap \mathcal{F}_{n}$. 
Since the $C^{*}$-algebra $A'\cap \mathcal{F}_{n}$ is finite-
dimensional, there is a positive number $c$ satisfying 
$\tau(p)\geq c$ for any non-zero projection $p\in A'\cap \mathcal{F}_{n}$. 
We can take a positive integer $N$ satisfying 
$\tau(S_{1}^{k}{S_{1}^{*}}^{k})=\frac{1}{n^{k}}< c$ for any $k\geq N$. 
Then we see that 
$
\tau(v_{k}v_{k}^{*})=\tau(v_{k}^{*}v_{k})\leq 
\tau(S_{1}^{k}{S_{1}^{*}}^{k})<c$ 
and hence $v_{k}v_{k}^{*}=0$. So we conclude that $x_{k}=0$ 
for $k\geq N$. 
In the same way we also have $x_{-k}=0$ 
for $k\geq N$.
\end{proof}

\begin{lem} 
Let $N$ be the positive integer in the previous lemma. 
For any $X\in A'\cap \mathcal{O}_{n}$, we have 
$$
X=\sum_{k=1}^{N}{S_{1}^{*}}^{k}x_{-k}+x_{0}+\sum_{k=1}^{N}x_{k}S_{1}^{k}
$$ 
where $x_{k}=E(X{S_{1}^{*}}^{k}),\ x_{-k}=E(S_{1}^{k}X)$ 
and $x_{0}=E(X)$.
\end{lem}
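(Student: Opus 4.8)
The plan is to show that the Fourier expansion of $X\in A'\cap\mathcal{O}_n$ truncates at level $N$, using the decay estimates already established. First I would recall that for a general element $x\in\mathcal{O}_n$ we have the (a priori only $L^2$-convergent) Fourier expansion $x=\sum_{k\geq 1}{S_1^*}^k x_{-k}+x_0+\sum_{k\geq 1}x_k S_1^k$ with $x_k=E(x{S_1^*}^k)$, $x_{-k}=E(S_1^k x)$. Applying this to $X\in A'\cap\mathcal{O}_n$ and invoking Lemma~2.3, the coefficients $x_k$ and $x_{-k}$ vanish for all $k\geq N$, so the sum on the right collapses to the finite expression $\sum_{k=1}^{N}{S_1^*}^k x_{-k}+x_0+\sum_{k=1}^{N}x_k S_1^k$. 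Hence $X$ and this finite sum have the same Fourier coefficients, and since the GNS representation with respect to the faithful state $\tau\circ E$ is faithful on $\mathcal{O}_n$, two elements with the same Fourier coefficients coincide.

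A cleaner way to package this, avoiding any subtlety about in what sense the infinite series converges, is as follows. Set $Y=\sum_{k=1}^{N}{S_1^*}^k x_{-k}+x_0+\sum_{k=1}^{N}x_k S_1^k$, a genuine element of $\mathcal{O}_n$. I would then compute $E((X-Y){S_1^*}^k)$ and $E(S_1^k(X-Y))$ for every $k\in\mathbb{Z}$ and check they all vanish: for $0\leq |k|\leq N$ this is immediate from the defining formulas for $x_k$ together with the orthogonality relations $E(x_j S_1^j {S_1^*}^k)=\delta_{jk}x_j$ type identities (more precisely $E(S_1^i{S_1^*}^j)=\delta_{ij}S_1^i{S_1^*}^i$ and $E(S_1^iS_1^j)=0$), and for $|k|>N$ it follows from Lemma~2.3 applied to $X$ (the coefficients of $X$ vanish) together with the fact that the finite sum $Y$ has no Fourier component at those levels. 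Since $E$ is faithful and the collection of maps $x\mapsto E(x{S_1^*}^k)$, $x\mapsto E(S_1^k x)$ separates points of $\mathcal{O}_n$ (this is exactly the faithfulness of the GNS representation for $\tau\circ E$, or equivalently Cuntz's original argument that an element with all Fourier coefficients zero is zero), we conclude $X=Y$.

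The main obstacle — really the only thing requiring care — is justifying that vanishing of all Fourier coefficients forces an element of $\mathcal{O}_n$ to be zero, i.e.\ that the Fourier expansion genuinely determines the element. The $L^2$-convergence stated in the excerpt gives this for free once we know the GNS vector state $\tau\circ E$ is faithful on $\mathcal{O}_n$, which is standard (it follows from faithfulness of $E$ together with faithfulness of $\tau$ on $\mathcal{F}_n$, the latter because $\mathcal{F}_n$ is UHF). Everything else is bookkeeping with the relations $S_i x = \varphi(x)S_i$ and $x{S_i^*}={S_i^*}\varphi(x)$ and the multiplicativity properties of $E$ over $\mathcal{F}_n$. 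So in practice the proof is: write the abstract Fourier expansion, quote Lemma~2.3 to kill the tail, and invoke faithfulness of $\tau\circ E$ to identify $X$ with the resulting finite sum.
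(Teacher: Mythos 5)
Your proof is correct and follows the same route as the paper: write the Fourier expansion of $X$, apply Lemma~2.3 to annihilate all coefficients with index $\geq N$, and conclude. The paper's own proof is just these two steps stated tersely; your additional care about why vanishing Fourier coefficients (via faithfulness of $\tau\circ E$) identifies $X$ with the finite sum is a reasonable filling-in of a detail the paper leaves implicit.
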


\begin{proof}
We have the Fourier expansion 
$$
X=\sum_{k=1}^{\infty}{S_{1}^{*}}^{k}x_{-k}+x_{0}+
\sum_{k=1}^{\infty}x_{k}S_{1}^{k}.
$$
Thus by the previous lemma, we are done. 
\end{proof}

\begin{lem}
We define the isomorphism $\pi_{k}$ on $A$ by 
$$
\pi_{k}(x)=
\begin{pmatrix}
x&0\\
0&\varphi^{k}(x)
\end{pmatrix},\ \ \ x\in A.
$$
Then we have 
$$
\begin{pmatrix}
0&x_{k}\\
x_{k}^{*}&0
\end{pmatrix},
\ 
\begin{pmatrix}
0&x_{-k}^{*}\\
x_{-k}&0
\end{pmatrix}
\in \pi_{k}(A)'\cap M_{2}(\mathcal{F}_{n})
$$
where 
$x_{k}=E(X{S_{1}^{*}}^{k}),\ x_{-k}=E(S_{1}^{k}X)$ 
for $X\in A'\cap \mathcal{O}_{n}$.
\end{lem}

\begin{proof}
This is an immediate consequence of the relations 
$ax_{k}=x_{k}\varphi^{k}(a)$ and 
$x_{-k}a=\varphi^{k}(a)x_{-k}$ for $a\in \mathcal{O}_{n}$. 
\end{proof}

\begin{lem}
The $C^{*}$-algebra $\pi_{k}(A)'\cap M_{2}(\mathcal{F}_{n})$ 
is finite-dimensional. 
\end{lem}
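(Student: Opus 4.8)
The plan is to describe $\mathcal{C}:=\pi_{k}(A)'\cap M_{2}(\mathcal{F}_{n})$ through its Peirce decomposition with respect to the two projections $e_{1}={\rm diag}(1,0)$ and $e_{2}={\rm diag}(0,1)$, which both lie in $\pi_{k}(A)'$. Writing $P=e_{1}\mathcal{C}e_{1}$, $Q=e_{2}\mathcal{C}e_{2}$ and $V=e_{1}\mathcal{C}e_{2}$, one has $\mathcal{C}=P\oplus V\oplus V^{*}\oplus Q$ as a Banach space, and the commutation relations for the block entries of an element of $\mathcal{C}$ (which are immediate from the definition of $\pi_{k}$) identify $P$ with $A'\cap\mathcal{F}_{n}$, finite-dimensional by hypothesis, and $Q$ with $\varphi^{k}(A)'\cap\mathcal{F}_{n}$. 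So the proof splits into two independent claims: (i) $\varphi^{k}(A)'\cap\mathcal{F}_{n}$ is finite-dimensional; (ii) any unital $C^{*}$-algebra whose corners with respect to a pair of complementary projections are both finite-dimensional is itself finite-dimensional.

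For (i) I would use that, under the canonical isomorphism $\mathcal{F}_{n}\cong\mathcal{F}_{n}^{(k)}\otimes\varphi^{k}(\mathcal{F}_{n})$ with $\mathcal{F}_{n}^{(k)}={\rm span}\{S_{\mu}S_{\nu}^{*}:|\mu|=|\nu|=k\}\cong M_{n^{k}}$, the endomorphism $\varphi^{k}$ is the $k$-fold tensor shift; in particular $\mathcal{F}_{n}^{(k)}\subset\varphi^{k}(A)'\cap\mathcal{F}_{n}$, so that $\varphi^{k}(A)'\cap\mathcal{F}_{n}=\mathcal{F}_{n}^{(k)}\otimes(\varphi^{k}(A)'\cap\varphi^{k}(\mathcal{F}_{n}))$. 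Since $\varphi^{k}$ is a $*$-isomorphism onto its image, the second tensor factor is $*$-isomorphic to $A'\cap\mathcal{F}_{n}$, whence $\varphi^{k}(A)'\cap\mathcal{F}_{n}$ is finite-dimensional, of dimension $n^{2k}\dim(A'\cap\mathcal{F}_{n})$.

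For (ii) I would pass to the weak closure $M=\mathcal{C}''$ inside $M_{2}(R)$, where $R=\mathcal{F}_{n}''$. Because $e_{i}\mathcal{C}e_{i}$ is finite-dimensional, hence weakly closed, and is weakly dense in $e_{i}Me_{i}$, we get $e_{i}Me_{i}=e_{i}\mathcal{C}e_{i}$, still finite-dimensional, and $e_{1}+e_{2}=1$. Now I would show that $M$ admits only orthogonal families of nonzero projections of bounded cardinality: given mutually orthogonal nonzero projections $p_{1},\dots,p_{m}$ in $M$, the identity $p_{j}=\sum_{s,t}e_{s}p_{j}e_{t}$ together with the elementary bound $\|e_{s}p_{j}e_{t}\|^{2}\le\|e_{s}p_{j}e_{s}\|$ forces $\max(\|e_{1}p_{j}e_{1}\|,\|e_{2}p_{j}e_{2}\|)\ge\frac{1}{16}$ for every $j$; on the other hand $\sum_{j}e_{i}p_{j}e_{i}\le e_{i}$, so applying the canonical matricial trace ${\rm Tr}_{i}$ of the finite-dimensional algebra $e_{i}Me_{i}$, which dominates the norm on positive elements, bounds the number of $j$ with $\|e_{i}p_{j}e_{i}\|\ge\frac{1}{16}$ by $16\,{\rm Tr}_{i}(e_{i})$. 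Hence $m\le16({\rm Tr}_{1}(e_{1})+{\rm Tr}_{2}(e_{2}))$, a fixed bound. A von Neumann algebra admitting only orthogonal families of nonzero projections of bounded size is finite-dimensional, so $M$, and therefore $\mathcal{C}\subseteq M$, is finite-dimensional.

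The step I expect to be the main obstacle is (ii): one must resist proving it at the $C^{*}$-level, where it is false (commutative counterexamples such as $C[0,1]$ have bound $1$), and instead pass to the von Neumann closure, where the corners stay finite-dimensional and the projection-counting argument closes the proof; pinning down the constant in the elementary norm estimate is the only genuinely computational point. An alternative to the counting step would be to run the type decomposition of $M$ directly — no type ${\rm II}$, ${\rm III}$ or ${\rm I}_{\infty}$ summand can occur since every nonzero corner of such would be infinite-dimensional, while in a homogeneous type ${\rm I}_{m}$ summand $M_{m}\,\bar{\otimes}\,Z_{m}$ with $m<\infty$ the finite-dimensionality of the corner forces (via the central support of the cutting projection) the centre $Z_{m}$, and hence the summand, to be finite-dimensional — or instead to use the Hilbert-module description of $V$ over $Q$ and show that the left $P$-action is unital into the compact operators, forcing $V$ to be finitely generated projective and hence finite-dimensional.
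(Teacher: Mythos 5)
Your proof is correct and follows essentially the same route as the paper: the same Peirce decomposition of $\pi_{k}(A)'\cap M_{2}(\mathcal{F}_{n})$ with respect to ${\rm diag}(1,0)$ and ${\rm diag}(0,1)$, the same identification of the corners with $A'\cap\mathcal{F}_{n}$ and $\varphi^{k}(A)'\cap\mathcal{F}_{n}\simeq M_{n^{k}}(\mathbb{C})\otimes(A'\cap\mathcal{F}_{n})$, and the same passage to the weak closure to deduce finite-dimensionality from the two finite-dimensional corners. The only difference is in the last step, where the paper reduces to a factor via the finite-dimensional centre while you count orthogonal projections; both work, and your version in fact supplies the weak-density justification that the corners of $B''$ equal those of $B$, which the paper leaves implicit.
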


\begin{proof}
We set 
$$
B=\pi_{k}(A)'\cap M_{2}(\mathcal{F}_{n}),\ \ 
e=
\begin{pmatrix}
1&0\\
0&0
\end{pmatrix}
\in B,\ \ 
f=
\begin{pmatrix}
0&0\\
0&1
\end{pmatrix}
=1-e.
$$
Then we see that $eBe\simeq A'\cap \mathcal{F}_{n}$ and 
$fBf\simeq \varphi^{k}(A)'\cap \mathcal{F}_{n}
\simeq M_{n^{k}}({\Bbb C})\otimes (A'\cap \mathcal{F}_{n})
$. So the both algebra $eBe$ and $fBf$ are finite-dimensional 
and hence $B$ is finite-dimensional. Indeed 
the von Neumann algebras $eB''e$ and $fB''f$ are finite-dimensional. 
So the center of $B''$ is finite-dimensional. Therefore we 
may assume that $B''$ is a factor. Then 
the finite-dimensionality of $eB''e$ and $fB''f$ ensures that 
$B''$ is finite-dimensional. Therefore $B$ is finite-dimensional. 

\end{proof}

\bigskip

\begin{proof}[Proof of Theorem 2.1.]
Consider the vector space 
$$V_{k}=\{x_{k}=E(X{S_{1}^{*}}^{k})\ |\ X\in A'\cap \mathcal{O}_{n}\}.$$ 
Since the map 
$$
V_{k}\ni x\mapsto 
\begin{pmatrix}
0&x\\
x^{*}&0
\end{pmatrix}
\in \pi_{k}(A)'\cap M_{2}(\mathcal{F}_{n})
$$ 
is injective and $\Bbb{R}$-linear, the vector space 
$V_{k}$ is finite-dimensional. 
In the same way the vector space 
$$
V_{-k}=\{x_{k}=E(S_{1}^{k}X)\ |\ X\in A'\cap \mathcal{O}_{n}\}
$$ 
is also finite-dimensional. 
On the other hand, the element $x_{0}=E(X)$ belongs to 
the finite-dimensional $C^{*}$-algebra $A'\cap \mathcal{F}_{n}$. 
Combining these with Lemma 2.5, we see that 
$A'\cap \mathcal{O}_{n}$ is finite-dimensional. 
\end{proof}

\bigskip

\begin{prop}
There exists an orthogonal family of minimal projections 
$e_{1},\dots,e_{l}\in A'\cap \mathcal{O}_{n}$ satisfying the 
following. 
\begin{enumerate}
\item $\sum_{i=1}^{l}e_{i}=1$ and 
$e_{1},\dots,e_{l}\in A'\cap \mathcal{F}_{n}$. 
\item There are integers $k_{1},\dots,k_{l}$ such that 
${\rm Ad}u_{z}(x)=\gamma_{z}(x)$ 
for $x\in A'\cap \mathcal{O}_{n}$ 
where 
$u_{z}=z^{k_{1}}e_{1}+\cdots+z^{k_{l}}e_{l}$. 
\end{enumerate}
\end{prop}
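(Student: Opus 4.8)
The plan is to reduce everything to the structure of the finite-dimensional $C^{*}$-algebra $D:=A'\cap\mathcal{O}_{n}$, which is finite-dimensional by Theorem 2.1. Since $A\subset\mathcal{F}_{n}=\mathcal{O}_{n}^{\gamma}$, the gauge automorphisms $\gamma_{z}$ fix $A$ pointwise, hence $\gamma_{z}(D)=\gamma_{z}(A)'\cap\mathcal{O}_{n}=D$; thus $\gamma$ restricts to an action of $\mathbb{T}$ on $D$, whose fixed-point algebra is $D\cap\mathcal{F}_{n}=A'\cap\mathcal{F}_{n}$. Because $\mathbb{T}$ is connected, the continuous map $z\mapsto\gamma_{z}(z_{j})$ into the discrete set of minimal central projections of $D$ is constant, so $\gamma$ fixes every matrix summand of $D$; in particular the minimal central projections of $D$ already lie in $A'\cap\mathcal{F}_{n}$, and it is enough to build the projections $e_{i}$ inside each summand separately.

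Next I would produce a ``Hamiltonian'' for the restricted action. The map $\theta\mapsto\gamma_{e^{i\theta}}|_{D}$ is a norm-continuous one-parameter group of $*$-automorphisms of the finite-dimensional algebra $D$, so its generator is an inner $*$-derivation $\mathrm{ad}(ih_{0})$ with $h_{0}=h_{0}^{*}\in D$, whence $\gamma_{e^{i\theta}}(x)=e^{i\theta h_{0}}xe^{-i\theta h_{0}}$ for $x\in D$. Since $\gamma_{e^{2\pi i}}|_{D}=\mathrm{id}$, the unitary $e^{2\pi i h_{0}}$ is central in $D$, hence a scalar on each matrix summand; subtracting a suitable real scalar from $h_{0}$ on each summand yields $h=h^{*}\in D$ with $e^{2\pi i h}=1$, i.e. $h$ has integer spectrum, and still $\gamma_{e^{i\theta}}(x)=e^{i\theta h}xe^{-i\theta h}$ on $D$. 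Then I would check that $h$ can be taken gauge-invariant: applying $\gamma_{w}$ to the identity $\gamma_{e^{i\theta}}|_{D}=\mathrm{Ad}(e^{i\theta h})$ and using commutativity of $\gamma$ gives $\gamma_{w}(h)-h\in Z(D)$, so on each summand $\gamma_{w}(h)$ differs from $h$ by a scalar; comparing spectra (an automorphism preserves the spectrum) forces that scalar to be $0$, hence $\gamma_{w}(h)=h$ and $h\in D\cap\mathcal{F}_{n}=A'\cap\mathcal{F}_{n}$.

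Finally I would decompose $h$. Note that $A'\cap\mathcal{F}_{n}=D^{\gamma}=\{h\}'\cap D$ (the fixed points of $\mathrm{Ad}(e^{i\theta h})$ are exactly the elements of $D$ commuting with $h$). Within each matrix summand of $D$ I can choose, for every eigenvalue of $h$, an orthogonal family of rank-one projections of that summand summing to the corresponding spectral projection of $h$; these commute with $h$, hence lie in $A'\cap\mathcal{F}_{n}$, and they are minimal in $D=A'\cap\mathcal{O}_{n}$. Collecting them over all summands yields an orthogonal family $e_{1},\dots,e_{l}$ of minimal projections of $A'\cap\mathcal{O}_{n}$ with $\sum_{i=1}^{l}e_{i}=1$, each $e_{i}\in A'\cap\mathcal{F}_{n}$, and $h=\sum_{i=1}^{l}k_{i}e_{i}$ with integers $k_{i}$ (the eigenvalue of $h$ on the range of $e_{i}$). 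Then $u_{z}:=\sum_{i=1}^{l}z^{k_{i}}e_{i}$ is a well-defined unitary, equal to $e^{i\theta h}$ for $z=e^{i\theta}$, lying in $A'\cap\mathcal{F}_{n}\subset\mathcal{F}_{n}$, and $\mathrm{Ad}\,u_{z}(x)=\gamma_{z}(x)$ for every $x\in A'\cap\mathcal{O}_{n}$; this is precisely (i) and (ii).

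The step I expect to be the crux is the middle one: arranging that the one-parameter unitary group implementing $\gamma|_{D}$ can be chosen simultaneously with integer spectrum (so that $u_{z}$ makes sense for $z\in\mathbb{T}$) and gauge-invariant (so that the spectral projections $e_{i}$ fall in $A'\cap\mathcal{F}_{n}$ rather than merely in $A'\cap\mathcal{O}_{n}$). Once such an $h\in A'\cap\mathcal{F}_{n}$ is in hand, the remaining steps are routine finite-dimensional linear algebra.
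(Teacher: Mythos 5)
Your proof is correct and follows essentially the same route as the paper: the paper simply invokes the standard structure of a continuous circle action on a finite-dimensional, $\gamma$-invariant $C^{*}$-algebra and then observes that the resulting minimal projections are $\gamma$-fixed, while you supply the details of that standard fact (inner generator $h$ with integer spectrum, gauge-invariance of $h$ via the spectrum argument, and the spectral decomposition into minimal projections). The only cosmetic difference is that you obtain $e_{i}\in A'\cap\mathcal{F}_{n}$ by first showing $h\in A'\cap\mathcal{F}_{n}$, whereas the paper deduces it afterwards from $\gamma_{z}(e_{i})=u_{z}e_{i}u_{z}^{*}=e_{i}$.
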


\begin{proof}
Since $A'\cap \mathcal{O}_{n}$ is finite-dimensional and 
$\gamma$-invariant, there exists an orthogonal 
family of minimal projections 
$e_{1},\dots,e_{l}\in A'\cap \mathcal{O}_{n}$ and 
integers $k_{1},\dots,k_{l}$ such that 
${\rm Ad}u_{z}(x)=\gamma_{z}(x)$ where 
$u_{z}=z^{k_{1}}e_{1}+\cdots+z^{k_{l}}e_{l}$ and 
$x\in A'\cap \mathcal{O}_{n}$. 
Then $e_{i}\in (A'\cap \mathcal{O}_{n})^{\gamma}
=A'\cap \mathcal{F}_{n}$. 
\end{proof}

\begin{cor}
If $A$ is an irreducible $C^{*}$-subalgebras of 
$\mathcal{F}_{n}$, then we have 
$A'\cap \mathcal{O}_{n}={\Bbb C}$. 
\end{cor}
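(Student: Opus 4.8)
The plan is to deduce the corollary directly from the preceding Proposition~2.7. Suppose $A$ is irreducible, i.e. $A'\cap\mathcal{F}_{n}=\mathbb{C}$. Applying the proposition, we obtain an orthogonal family of minimal projections $e_{1},\dots,e_{l}\in A'\cap\mathcal{O}_{n}$ with $\sum_{i}e_{i}=1$ and, crucially, $e_{1},\dots,e_{l}\in A'\cap\mathcal{F}_{n}$. Since $A'\cap\mathcal{F}_{n}=\mathbb{C}$, the only projections available are $0$ and $1$, so the orthogonal family consisting of nonzero projections summing to $1$ must be the single projection $e_{1}=1$, i.e. $l=1$.

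Now I would use part (ii) of the proposition: there is an integer $k_{1}$ such that ${\rm Ad}\,u_{z}(x)=\gamma_{z}(x)$ for all $x\in A'\cap\mathcal{O}_{n}$, where $u_{z}=z^{k_{1}}e_{1}=z^{k_{1}}\cdot 1$. But $u_{z}$ is then a scalar multiple of the identity, so ${\rm Ad}\,u_{z}$ is the identity automorphism. Hence $\gamma_{z}(x)=x$ for every $x\in A'\cap\mathcal{O}_{n}$ and every $z\in\mathbb{T}$; that is, $A'\cap\mathcal{O}_{n}$ is contained in the fixed point algebra $\mathcal{F}_{n}$ of the gauge action. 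Therefore $A'\cap\mathcal{O}_{n}=A'\cap\mathcal{F}_{n}=\mathbb{C}$, which is the assertion.

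There is no serious obstacle here; the work has all been done in Theorem~2.1 (finite-dimensionality of $A'\cap\mathcal{O}_{n}$, needed so that the gauge action on it can be diagonalized) and in Proposition~2.7 (the explicit diagonalizing unitary with gauge-equivariance and the membership $e_{i}\in\mathcal{F}_{n}$). The only point to be a little careful about is the degenerate case $l=1$: one should note that a "family of minimal projections summing to $1$" with the center being $\mathbb{C}$ forces $l=1$ and $e_{1}=1$, and that in this case the unitary $u_{z}$ in (ii) is scalar, so conjugation by it is trivial and the claimed intertwining relation collapses to $\gamma_{z}|_{A'\cap\mathcal{O}_{n}}={\rm id}$. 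Once that is observed, the conclusion $A'\cap\mathcal{O}_{n}\subseteq\mathcal{F}_{n}$ is immediate.
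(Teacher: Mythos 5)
Your argument is correct and follows essentially the paper's own route: both deduce the corollary immediately from the preceding proposition, using that the minimal projections $e_{1},\dots,e_{l}$ lie in $A'\cap\mathcal{F}_{n}=\mathbb{C}$ to force $l=1$ and $e_{1}=1$. The only (harmless) difference is the finishing step: the paper concludes directly from $1$ being a \emph{minimal} projection of $A'\cap\mathcal{O}_{n}$ that this algebra is $\mathbb{C}\cdot 1$, whereas you invoke part (ii) to see that $u_{z}$ is scalar, hence $\gamma_{z}|_{A'\cap\mathcal{O}_{n}}=\mathrm{id}$ and $A'\cap\mathcal{O}_{n}\subseteq\mathcal{F}_{n}$; both finishes are valid.
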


\begin{proof}
By the previous proposition, we know that 
there are minimal projections in $A'\cap \mathcal{O}_{n}$ 
such that they belong to $A'\cap \mathcal{F}_{n}$. 
Thus we are done. 
\end{proof}

In the rest of this paper we frequently use the projections 
$e_{1},\dots,e_{l}\in A'\cap \mathcal{F}_{n}$ and the unitary 
$u_{z}=z^{k_{1}}e_{1}+\cdots+z^{k_{l}}e_{l}$ 
in the above proposition.

\begin{rem}
The Bratteli diagram of 
the inclusion 
$A'\cap \mathcal{F}_{n}\subset A'\cap \mathcal{O}_{n}$ has a 
special form. They have a common family of minimal projections. So 
for each vertex corresponding to a direct summand of 
$A'\cap \mathcal{F}_{n}$, 
there is only one edge which starts on this vertex. For example, 
if $A'\cap \mathcal{F}_{n}={\Bbb C}$, then 
$A'\cap \mathcal{O}_{n}={\Bbb C}$. If 
$A'\cap \mathcal{F}_{n}={\Bbb C}\oplus {\Bbb C}$, then 
$A'\cap \mathcal{O}_{n}$ is isomorphic to either 
${\Bbb C}\oplus {\Bbb C}$ or $M_{2}({\Bbb C})$. 
\end{rem}

\begin{lem} 
Let $U\in \mathcal{O}_{n}$ be a unitary satisfying 
$UAU^{*}\subset \mathcal{F}_{n}$. Then we have 
\begin{enumerate} 
\item 
$U^{*}\gamma_{z}(U)\in A'\cap \mathcal{O}_{n}$. 
\item 
There exists a unitary $w\in A'\cap \mathcal{O}_{n}$ 
and integers $m_{1},\dots,m_{l}$ such that 
$\gamma_{z}(Uwe_{i})=z^{m_{i}}Uwe_{i}$
\end{enumerate}
\end{lem}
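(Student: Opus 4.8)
The plan is to first establish (i), then use finite-dimensionality of $A'\cap\mathcal{O}_{n}$ together with the gauge action to get (ii).

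For part (i): since $UAU^{*}\subset\mathcal{F}_{n}$, every element $UaU^{*}$ with $a\in A$ is gauge-invariant, so $\gamma_{z}(UaU^{*})=UaU^{*}$, i.e. $\gamma_{z}(U)\,\gamma_{z}(a)\,\gamma_{z}(U)^{*}=UaU^{*}$. Now $a\in\mathcal{F}_{n}$ (as $A\subset\mathcal{F}_{n}$), so $\gamma_{z}(a)=a$, giving $\gamma_{z}(U)\,a\,\gamma_{z}(U)^{*}=U a U^{*}$, hence $U^{*}\gamma_{z}(U)$ commutes with $a$ for all $a\in A$. Since $U^{*}\gamma_{z}(U)$ is a unitary in $\mathcal{O}_{n}$, it lies in $A'\cap\mathcal{O}_{n}$.

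For part (ii): write $v_{z}=U^{*}\gamma_{z}(U)\in A'\cap\mathcal{O}_{n}$. A direct check shows $z\mapsto v_{z}$ satisfies the cocycle identity $v_{zw}=v_{z}\gamma_{z}(v_{w})$ with respect to the restriction of $\gamma$ to the finite-dimensional, $\gamma$-invariant algebra $A'\cap\mathcal{O}_{n}$. The idea is to ``untwist'' this cocycle: since $A'\cap\mathcal{O}_{n}$ is finite-dimensional and the gauge action on it is (by Proposition~2.7) implemented by the unitaries $u_{z}=z^{k_{1}}e_{1}+\cdots+z^{k_{l}}e_{l}\in A'\cap\mathcal{O}_{n}$, one reduces to a cocycle over the trivial action on a finite-dimensional algebra, which is a coboundary. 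Concretely, I would look for a unitary $w\in A'\cap\mathcal{O}_{n}$ such that $\gamma_{z}(Uw)=Uw\cdot(\text{diagonal phase})$; setting $\gamma_{z}(Uw)=Uw\,d_{z}$ forces $d_{z}=(Uw)^{*}\gamma_{z}(Uw)=w^{*}v_{z}\gamma_{z}(w)$, and we want $d_{z}$ to act as scalars $z^{m_{i}}$ on each minimal central summand, i.e. $d_{z}=z^{m_{1}}e_{1}'+\cdots$ for a suitable family of projections. Using that $A'\cap\mathcal{O}_{n}$ decomposes as a finite direct sum of matrix algebras on which the gauge action is inner (implemented by $u_z$), the twisted cocycle $u_z^* v_z$ (with trivial action) is a coboundary $w^* \gamma_z(w) \cdot u_z^*$-type expression, yielding the desired $w$ and integers $m_i$ coming from the exponents $k_i$.

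The main obstacle I anticipate is making the cohomological argument rigorous and explicit enough to extract genuine integer exponents $m_{1},\dots,m_{l}$ rather than just continuous phase functions: one must use that the eigenvalues of the gauge action on a finite-dimensional space are characters of $\mathbb{T}$, hence of the form $z\mapsto z^{m}$, and carefully track how the minimal projections $e_i$ of $A'\cap\mathcal{O}_n$ interact with $w$ and with $U$ (note the $e_i$ need not be central in $A'\cap\mathcal{O}_n$, only minimal). I expect one diagonalizes the finite-dimensional unitary representation $z\mapsto v_z$ up to coboundary, obtaining $w$ as the intertwiner to a diagonal form; the integers $m_i$ then arise as the weights of this representation restricted to the ranges of $e_1,\dots,e_l$.
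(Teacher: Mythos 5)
Your proposal is correct and follows essentially the same route as the paper: part (i) is identical, and for part (ii) the paper's "untwisting" is exactly your idea, realized concretely by checking that $z\mapsto U^{*}\gamma_{z}(U)u_{z}$ is a unitary group (a representation of $\mathbb{T}$) inside the finite-dimensional algebra $A'\cap\mathcal{O}_{n}$, diagonalizing it as $w^{*}U^{*}\gamma_{z}(U)u_{z}w=\sum_{i}z^{n_{i}}e_{i}$, and reading off $m_{i}=n_{i}-k_{i}$. The only point to be careful about is the order of multiplication: it is $U^{*}\gamma_{z}(U)u_{z}$ (not $u_{z}^{*}v_{z}$) that is multiplicative, precisely because ${\rm Ad}u_{z}=\gamma_{z}$ on $A'\cap\mathcal{O}_{n}$ and $U^{*}\gamma_{z}(U)$ lies in that algebra.
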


\begin{proof}
For any $a\in A$, since $UaU^{*}\in \mathcal{F}_{n}$, 
we see that 
$$
\gamma_{z}(U)a\gamma_{z}(U)^{*}=\gamma_{z}(UaU^{*})
=UaU^{*}.
$$ 
Thus we have $U^{*}\gamma_{z}(U)\in A'\cap \mathcal{O}_{n}$. 
It is easy to see that the family 
$\{U^{*}\gamma_{z}(U)u_{z}\}_{z\in {\Bbb T}}$ is a 
unitary group. Indeed since $U^{*}\gamma_{z}(U)\in A'\cap \mathcal{O}_{n}$, 
we see that 
\begin{align*}
U^{*}\gamma_{z_{1}}(U)u_{z_{1}}&U^{*}\gamma_{z_{2}}(U)u_{z_{2}}
=U^{*}\gamma_{z_{1}}(U){\rm Ad}u_{z_{1}}(U^{*}\gamma_{z_{2}}(U))
u_{z_{1}z_{2}}\\
&=U^{*}\gamma_{z_{1}}(U)\gamma_{z_{1}}(U^{*})\gamma_{z_{1}z_{2}}(U)
u_{z_{1}z_{2}}
=U^{*}\gamma_{z_{1}z_{2}}(U)u_{z_{1}z_{2}}. 
\end{align*}
Since $\{U^{*}\gamma_{z}(U)u_{z}\}_{z\in {\Bbb T}}$ is a 
unitary group in the finite-dimensional $C^{*}$-algebra 
$A'\cap \mathcal{O}_{n}$, we can take a unitary 
$w\in A'\cap \mathcal{O}_{n}$ 
and integers $n_{1},\dots,n_{l}$ such that 
$w^{*}U^{*}\gamma_{z}(U)u_{z}w=z^{n_{1}}e_{1}+\cdots+z^{n_{l}}e_{l}$. 
Then we see that 
\begin{align*}
\gamma_{z}(Uwe_{i})&=\gamma_{z}(U)u_{z}wu_{z}^{*}e_{i}
=\{Uw(z^{n_{1}}e_{1}+\cdots+z^{n_{l}}e_{l})w^{*}u_{z}^{*}\}
u_{z}wu_{z}^{*}e_{i}\\
&=Uw(z^{n_{1}}e_{1}+\cdots+z^{n_{l}}e_{l})u_{z}^{*}e_{i}
=z^{n_{i}-k_{i}}Uwe_{i}.
\end{align*}
\end{proof}

\begin{rem}
By the previous lemma, we know that the Fourier expansion 
of $U$ can be write down in a finite sum. Indeed if $m_{i}>0$, then 
$Uwe_{i}=(Uwe_{i}{S_{1}^{*}}^{m_{i}})S_{1}^{m_{i}}$ and 
$Uwe_{i}{S_{1}^{*}}^{m_{i}}\in \mathcal{F}_{n}$. 
On the other hand if $m_{i}<0$, then 
$Uwe_{j}={S_{1}^{*}}^{-m_{i}}(S_{1}^{-m_{i}}Uwe_{j})$ and 
$S_{1}^{-m_{i}}Uwe_{j}\in \mathcal{F}_{n}$. Therefore 
the Fourier expansion 
of $Uw$ is a finite sum. Combining this with Lemma 2.5, 
we can show that the Fourier expansion 
of $U$ is a finite sum.
\end{rem}

\begin{prop}
For any normalizer 
$U\in \mathcal{N}_{\mathcal{O}_{n}}(A)$, 
there exist unitaries $v\in A'\cap \mathcal{F}_{n}$ 
and $w\in A'\cap \mathcal{O}_{n}$ satisfying 
$$
vUw\in \mathcal{N}_{\mathcal{O}_{n}}
(e_{1}\mathcal{F}_{n}e_{1}
\oplus\cdots\oplus e_{l}\mathcal{F}_{n}e_{l}).
$$
\end{prop}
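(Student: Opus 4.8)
The plan is to take a normalizer $U\in\mathcal{N}_{\mathcal{O}_n}(A)$ and successively multiply it by suitable unitaries from $A'\cap\mathcal{F}_n$ and $A'\cap\mathcal{O}_n$ so as to straighten out its behaviour under the gauge action, and then to identify the resulting conjugation with a normalizer of the diagonal block algebra $\bigoplus_i e_i\mathcal{F}_n e_i$. The key observation is that $UAU^*=A\subset\mathcal{F}_n$, so $U$ (indeed $U^*$) satisfies the hypothesis $UAU^*\subset\mathcal{F}_n$ of Lemma~2.11. Hence there are a unitary $w\in A'\cap\mathcal{O}_n$ and integers $m_1,\dots,m_l$ with $\gamma_z(Uwe_i)=z^{m_i}Uwe_i$ for all $z\in\mathbb{T}$. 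Using Remark~2.12 and Lemma~2.7 we know the Fourier expansion of $Uw$ is a finite sum; in fact, writing $Uwe_i$ as ${S_1^*}^{m_i}(S_1^{m_i}Uwe_i)$ or $(Uwe_i{S_1^*}^{-m_i})S_1^{-m_i}$ according to the sign of $m_i$, we get $Uw=\sum_i V_i$ where each $V_i$ is of one of these two forms with the ``coefficient'' part lying in $\mathcal{F}_n$.

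First I would fix a single $i$ and analyse $e_i\mathcal{F}_n e_i$. Since $e_i\in A'\cap\mathcal{F}_n$ is a minimal projection there, the corner $e_iAe_i=Ae_i$ is (isomorphic to) a copy of $A$ sitting inside $e_i\mathcal{F}_n e_i$, and $UAU^*=A$ together with $\sum_j e_j=1$, $e_j\in A'\cap\mathcal{F}_n$, shows that conjugation by $U$ permutes the corners $e_j\mathcal{F}_n e_j$ compatibly with how ${\rm Ad}\,U$ acts on the central projections of $A'\cap\mathcal{O}_n$; replacing $U$ by $vUw$ with $v\in A'\cap\mathcal{F}_n$ and $w\in A'\cap\mathcal{O}_n$ only adjusts this within the allowed freedom. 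The content is that after these adjustments the unitary $vUw$ carries each $e_i\mathcal{F}_n e_i$ onto another $e_j\mathcal{F}_n e_j$: the gauge-homogeneity $\gamma_z(vUwe_i)=z^{m_i}vUwe_i$ (where $v$ contributes trivially to the gauge degree since $v\in\mathcal{F}_n$) forces $vUwe_i(vUwe_i)^*\in\mathcal{F}_n$ and $(vUwe_i)^*vUwe_i\in\mathcal{F}_n$, so that $vUw\,e_i\mathcal{F}_ne_i\,(vUw)^*\subset\mathcal{F}_n$. Combined with the fact that $vUw\,e_i\,(vUw)^*$ is a projection in $vUw(A'\cap\mathcal{O}_n)(vUw)^* = A'\cap\mathcal{O}_n$ that commutes with $A$, and that it must be one of the minimal projections $e_j$ up to the freedom in choosing $w$, I would conclude $vUw\,(e_i\mathcal{F}_ne_i)\,(vUw)^*=e_{\sigma(i)}\mathcal{F}_ne_{\sigma(i)}$ for some permutation $\sigma$.

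The precise choice of $v$ and $w$ is the crux. I would take $w$ first from Lemma~2.11 (or rather its proof, where $w$ is chosen inside $A'\cap\mathcal{O}_n$ to diagonalize the unitary cocycle $U^*\gamma_z(U)u_z$); this is what makes each $Uwe_i$ gauge-homogeneous. Then $Uw\,e_i\,(Uw)^*$ is a minimal projection of $A'\cap\mathcal{O}_n$, but need not lie in $A'\cap\mathcal{F}_n$ — this is exactly where the second unitary enters. Since $Uw\,e_i\,(Uw)^*$ and $e_{\sigma(i)}$ are two minimal projections in $A'\cap\mathcal{O}_n$ with the same central support (both minimal in the same summand, matched up via ${\rm Ad}(Uw)$ and Proposition~2.8), there is a partial isometry in $A'\cap\mathcal{O}_n$ intertwining them; assembling these over $i$ and using Proposition~2.8 to repair gauge behaviour yields a unitary — call it $v_0\in A'\cap\mathcal{O}_n$ — with $v_0\,Uw\,e_i\,(v_0Uw)^*=e_{\sigma(i)}$. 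Finally I would absorb the non-$\mathcal{F}_n$ part: writing $v_0=v'v''$ with $v'\in A'\cap\mathcal{F}_n$ and $v''$ supplying the gauge twist (possible by the block structure in Remark~2.10 — each summand of $A'\cap\mathcal{O}_n$ is $M_{r}(\mathbb{C})\otimes(\text{summand of }A'\cap\mathcal{F}_n)$ with the matrix units carrying the gauge action), one checks $v''Uw$ still conjugates $e_i\mathcal{F}_ne_i$ to $e_{\sigma(i)}\mathcal{F}_ne_{\sigma(i)}$ and is now gauge-compatible, so $v=v'$ and $w$ (possibly re-absorbing $v''$ into $w$ since $v''\in A'\cap\mathcal{O}_n$) do the job, giving $vUw\in\mathcal{N}_{\mathcal{O}_n}(\bigoplus_i e_i\mathcal{F}_ne_i)$.

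The main obstacle I anticipate is the bookkeeping in the last paragraph: splitting the correcting unitary of $A'\cap\mathcal{O}_n$ into an $\mathcal{F}_n$-part (which we are allowed to multiply on the left) and an $A'\cap\mathcal{O}_n$-part (which we multiply on the right), while keeping track of the permutation $\sigma$ of the minimal projections and verifying that $vUw$ really normalizes — rather than merely maps into — $\bigoplus_i e_i\mathcal{F}_ne_i$. One must check surjectivity of the induced map on the block algebra, which follows because $vUw$ is a unitary of $\mathcal{O}_n$ and $\bigoplus_i e_i\mathcal{F}_ne_i = \{x\in\mathcal{F}_n : xe_i=e_ix \text{ for all }i\} \cap (\text{hereditary piece})$ is recovered symmetrically under $(vUw)^*$; the gauge-homogeneity established above is what guarantees the image lands in $\mathcal{F}_n$ on both sides. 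Everything else — finite-dimensionality of the relevant relative commutants, existence of the $e_i$ and $u_z$, finiteness of the Fourier expansions — is supplied by Theorem~2.1 and Lemmas~2.4--2.11.
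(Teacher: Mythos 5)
Your overall strategy is the paper's: apply Lemma 2.10 to produce $w\in A'\cap\mathcal{O}_{n}$ with $\gamma_{z}(Uwe_{i})=z^{m_{i}}Uwe_{i}$, use this gauge-homogeneity to show the relevant corners are carried into $\mathcal{F}_{n}$, correct by a unitary so that the projections $e_{i}$ are permuted, and check the inclusion in both directions to get a genuine normalizer of $e_{1}\mathcal{F}_{n}e_{1}\oplus\cdots\oplus e_{l}\mathcal{F}_{n}e_{l}$. Up to that point the proposal is sound.

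The third paragraph, however, contains a claim that is both wrong and the source of all the extra machinery you then invoke: you assert that $Uw\,e_{i}\,(Uw)^{*}$ ``need not lie in $A'\cap\mathcal{F}_{n}$'' and therefore set out to build a correcting unitary $v_{0}\in A'\cap\mathcal{O}_{n}$ and split it as $v'v''$ with $v'\in A'\cap\mathcal{F}_{n}$ and $v''$ carrying a gauge twist. In fact your own second paragraph already shows the opposite: from $\gamma_{z}(Uwe_{i})=z^{m_{i}}Uwe_{i}$ one gets $\gamma_{z}(Uwe_{i}w^{*}U^{*})=z^{m_{i}}\overline{z}^{\,m_{i}}Uwe_{i}w^{*}U^{*}=Uwe_{i}w^{*}U^{*}$, so this projection is gauge-invariant, hence lies in $\mathcal{F}_{n}$; and since $U$ normalizes $A$ it normalizes $A'\cap\mathcal{O}_{n}$, so $Uwe_{i}w^{*}U^{*}\in (A'\cap\mathcal{O}_{n})\cap\mathcal{F}_{n}=A'\cap\mathcal{F}_{n}$ automatically. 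This is exactly what the paper does: $\{Uwe_{i}w^{*}U^{*}\}_{i}$ is then a family of minimal projections of $A'\cap\mathcal{F}_{n}$ summing to $1$, so a single unitary $v\in A'\cap\mathcal{F}_{n}$ with $vUwe_{i}w^{*}U^{*}v^{*}=e_{j}$ exists directly, and since $v\in\mathcal{F}_{n}$ the homogeneity $\gamma_{z}(vUwe_{i})=z^{m_{i}}vUwe_{i}$ survives, giving both inclusions ${\rm Ad}(vUw)(e_{i}\mathcal{F}_{n}e_{i})\subset e_{j}\mathcal{F}_{n}e_{j}$ and ${\rm Ad}(vUw)^{*}(e_{j}\mathcal{F}_{n}e_{j})\subset e_{i}\mathcal{F}_{n}e_{i}$. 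The proposed factorization $v_{0}=v'v''$ is never justified (nothing guarantees a unitary of $A'\cap\mathcal{O}_{n}$ factors as an element of $A'\cap\mathcal{F}_{n}$ times a gauge-homogeneous piece), but it is also unnecessary: delete that detour, keep the observation about gauge-invariance of $Uwe_{i}w^{*}U^{*}$, and your argument coincides with the paper's.
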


\begin{proof}
By the previous lemma we have 
$\gamma_{z}(Uwe_{i})=z^{m_{i}}Uwe_{i}$. Then we get 
$
\gamma_{z}(Uwe_{i}w^{*}U^{*})=Uwe_{i}w^{*}U^{*}
$ and hence 
$Uwe_{i}w^{*}U^{*}\in \mathcal{F}_{n}$. 
Since 
$UA'\cap \mathcal{O}_{n}U^{*}=A'\cap \mathcal{O}_{n}$, 
we have 
$Uwe_{i}w^{*}U^{*}\in A'\cap\mathcal{F}_{n}$. 
Thus $\{Uwe_{i}w^{*}U^{*}\}_{i}$ is a family of 
minimal projections in the finite-dimensional 
$C^{*}$-algebra $A'\cap\mathcal{F}_{n}$. So 
we can find a unitary $v\in A'\cap \mathcal{F}_{n}$ 
satisfying $vUwe_{i}w^{*}U^{*}v^{*}=e_{j}$. 
Since $\gamma_{z}(vUwe_{i})=v\gamma_{z}(Uwe_{i})
=z^{m_{i}}vUwe_{i}$ for any $x\in \mathcal{F}_{n}$, 
we see that 
$
\gamma_{z}\circ{\rm Ad}vUw(e_{i}xe_{i})=
{\rm Ad}vUw(e_{i}xe_{i})
$. Therefore 
$(vUw)e_{i}\mathcal{F}_{n}e_{i}(vUw)^{*}
\subset e_{j}\mathcal{F}_{n}e_{j}
$. On the other hand, 
since 
$
\gamma_{z}(w^{*}U^{*}v^{*}e_{j})=
\gamma_{z}(vUwe_{i})^{*}=(z^{m_{i}}Uwe_{i})^{*}
=z^{-m_{i}}w^{*}U^{*}v^{*}e_{j}
$, 
we also have $(vUw)^{*}e_{j}\mathcal{F}_{n}e_{j}(vUw)
\subset e_{i}\mathcal{F}_{n}e_{i}
$. Therefore we have 
$$
{\rm Ad}vUw(e_{1}\mathcal{F}_{n}e_{1}
\oplus\cdots\oplus e_{l}\mathcal{F}_{n}e_{l})
\subset e_{1}\mathcal{F}_{n}e_{1}
\oplus\cdots\oplus e_{l}\mathcal{F}_{n}e_{l},
$$

$$
{\rm Ad}w^{*}U^{*}v^{*}(e_{1}\mathcal{F}_{n}e_{1}
\oplus\cdots\oplus e_{l}\mathcal{F}_{n}e_{l})
\subset e_{1}\mathcal{F}_{n}e_{1}
\oplus\cdots\oplus e_{l}\mathcal{F}_{n}e_{l}
$$
and hence 
$$
vUw\in \mathcal{N}_{\mathcal{O}_{n}}
(e_{1}\mathcal{F}_{n}e_{1}
\oplus\cdots\oplus e_{k}\mathcal{F}_{n}e_{k}).
$$
\end{proof}

\begin{rem}
The normalizer $\mathcal{N}_{\mathcal{O}_{n}}
(\mathcal{F}_{n})$ is a subset of $\mathcal{F}_{n}$. 
However the structure of $\mathcal{N}_{\mathcal{O}_{n}}
(e_{1}\mathcal{F}_{n}e_{1}
\oplus\cdots\oplus e_{l}\mathcal{F}_{n}e_{l})$ is not 
simple in general. See Examples 2.1 and 2.2. 
\end{rem}

\begin{lem}
Let $e\in \mathcal{F}_{n}$ be a projection. If a partial isometry 
$u\in \mathcal{O}_{n}$ satisfies $u^{*}u=uu^{*}=e$ and 
$ue\mathcal{F}_{n}eu^{*}=e\mathcal{F}_{n}e$, then we have 
$u\in \mathcal{F}_{n}$. 

\end{lem}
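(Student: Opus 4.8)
The plan is to mimic the Fourier-coefficient analysis of Lemmas 2.1 and 2.2: I will show that all the Fourier coefficients $u_{k}=E(u{S_1^*}^{k})$ and $u_{-k}=E(S_1^{k}u)$ of $u$ vanish for $k\geq 1$, so that $u=u_{0}=E(u)\in\mathcal{F}_{n}$. We may assume $e\neq 0$. From $u^{*}u=uu^{*}=e$ we get $u=eue$, and since $u(e\mathcal{F}_{n}e)u^{*}=e\mathcal{F}_{n}e$ the restriction $\beta:={\rm Ad}\,u|_{e\mathcal{F}_{n}e}$ is a $*$-automorphism of $e\mathcal{F}_{n}e$, so $ua=\beta(a)u$ for all $a\in e\mathcal{F}_{n}e$; applying the same to $u^{*}$ (which satisfies the same hypotheses, with automorphism $a\mapsto u^{*}au=\beta^{-1}(a)$) it is convenient to record $u^{*}a=\beta^{-1}(a)u^{*}$ as well.

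First I would establish twisted versions of the intertwining relation of Lemma 2.1. Using $S_1^{k}x=\varphi^{k}(x)S_1^{k}$, $a{S_1^*}^{k}={S_1^*}^{k}\varphi^{k}(a)$, and the $\mathcal{F}_{n}$-bimodule property of $E$, the relation $ua=\beta(a)u$ yields $u_{-k}a=\varphi^{k}(\beta(a))u_{-k}$ for $a\in e\mathcal{F}_{n}e$; since $(u^{*})_{-k}=u_{k}^{*}$, the analogous relation for $u^{*}$ gives, after taking adjoints, $au_{k}=u_{k}\varphi^{k}(\beta^{-1}(a))$. From $u=eue$ one also checks $u_{k}=eu_{k}\varphi^{k}(e)$ and $u_{-k}=\varphi^{k}(e)u_{-k}e$, so $u_{k}u_{k}^{*}$ and $u_{-k}^{*}u_{-k}$ lie in $e\mathcal{F}_{n}e$; the twisted relations then show, exactly as in the proof of Lemma 2.1, that both commute with all of $e\mathcal{F}_{n}e$. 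Here is the one place where the finite-dimensionality hypothesis of the earlier lemmas is replaced by something softer: $e\mathcal{F}_{n}e$ is a corner of the simple unital algebra $\mathcal{F}_{n}$ by a nonzero projection, hence simple, so $Z(e\mathcal{F}_{n}e)={\Bbb C}e$, and therefore $u_{k}u_{k}^{*}=c_{k}e$ and $u_{-k}^{*}u_{-k}=d_{k}e$ with $c_{k},d_{k}\geq 0$.

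Next I would run the estimate of Lemma 2.2 inside this corner. As there, Kadison's inequality for $E$ gives $u_{k}^{*}u_{k}\leq E(S_1^{k}u^{*}u{S_1^*}^{k})=S_1^{k}e{S_1^*}^{k}$ and likewise $u_{-k}u_{-k}^{*}\leq S_1^{k}e{S_1^*}^{k}$ (these elements being gauge-invariant). Taking polar decompositions $u_{k}=v_{k}|u_{k}|$ and $u_{-k}=w_{k}|u_{-k}|$ in $R=\mathcal{F}_{n}''$, the final projection $v_{k}v_{k}^{*}$ is the support of $u_{k}u_{k}^{*}=c_{k}e$, hence $0$ or $e$; similarly $w_{k}^{*}w_{k}$ is $0$ or $e$; while $v_{k}^{*}v_{k},\,w_{k}w_{k}^{*}\leq S_1^{k}e{S_1^*}^{k}$. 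The crucial computation is $\tau(S_1^{k}e{S_1^*}^{k})=n^{-k}\tau(e)$, which follows because $x\mapsto n^{k}\tau(S_1^{k}x{S_1^*}^{k})$ is a tracial state on $\mathcal{F}_{n}$, hence equals $\tau$ by uniqueness of the trace. Thus if, say, $c_{k}>0$ then $v_{k}v_{k}^{*}=e$ and $\tau(e)=\tau(v_{k}v_{k}^{*})=\tau(v_{k}^{*}v_{k})\leq n^{-k}\tau(e)<\tau(e)$, a contradiction whenever $k\geq 1$; hence $c_{k}=0$ and $u_{k}=0$, and symmetrically $u_{-k}=0$, for every $k\geq 1$. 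Therefore the Fourier expansion of $u$ collapses to $u=E(u)\in\mathcal{F}_{n}$.

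The bulk of the work is bookkeeping: tracking $\beta$ versus $\beta^{-1}$, the mild asymmetry between the $u_{k}$ and the $u_{-k}$, and the projections $e$ and $\varphi^{k}(e)$ that must be carried through every step because $u$ is only a partial isometry. The one genuinely substantive input — and the reason the argument works for every $k\geq 1$ rather than only for large $k$ as in Lemma 2.2 — is the exact identity $\tau(S_1^{k}e{S_1^*}^{k})=n^{-k}\tau(e)$, which is strictly below $\tau(e)$. (An alternative would be to analyse the gauge $1$-cocycle $u^{*}\gamma_{z}(u)\in(e\mathcal{F}_{n}e)'\cap e\mathcal{O}_{n}e$, but that would first require identifying this relative commutant with ${\Bbb C}e$, whereas the route above uses only the trivial equality $Z(e\mathcal{F}_{n}e)={\Bbb C}e$.)
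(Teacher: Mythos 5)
Your proof is correct, but it follows a genuinely different route from the paper's. The paper argues globally with the gauge cocycle: since $u^{*}\gamma_{z}(u)\in(e\mathcal{F}_{n}e)'\cap e\mathcal{O}_{n}e=e(\mathcal{F}_{n}'\cap\mathcal{O}_{n})e={\Bbb C}e$, one gets $\gamma_{z}(u)=z^{m}u$ for a single integer $m$, and then the trace computation applied to $v=u{S_{1}^{*}}^{m}$ (namely $\tau(e)=\tau(v^{*}v)=n^{-m}\tau(e)$) forces $m=0$; this is exactly the ``alternative'' you mention in your last paragraph and then decline to use. You instead kill each Fourier coefficient $u_{\pm k}$, $k\geq 1$, separately: the twisted intertwining relations $au_{k}=u_{k}\varphi^{k}(\beta^{-1}(a))$ and $u_{-k}a=\varphi^{k}(\beta(a))u_{-k}$ put $u_{k}u_{k}^{*}$ and $u_{-k}^{*}u_{-k}$ in the center of the simple corner $e\mathcal{F}_{n}e$, hence in ${\Bbb C}e$, and then the same numerical input $\tau(S_{1}^{k}e{S_{1}^{*}}^{k})=n^{-k}\tau(e)<\tau(e)$ rules out a nonzero coefficient for every $k\geq 1$ (not just large $k$, which is what distinguishes this from Lemma 2.2, where only finite-dimensionality of $A'\cap\mathcal{F}_{n}$ is available). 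The trade-off is clear: the paper's argument is shorter but leans on the relative-commutant identification $(e\mathcal{F}_{n}e)'\cap e\mathcal{O}_{n}e={\Bbb C}e$ (compression of $\mathcal{F}_{n}'\cap\mathcal{O}_{n}={\Bbb C}$ to the corner), whereas yours needs only the triviality of $Z(e\mathcal{F}_{n}e)$, at the cost of rerunning the Fourier-coefficient machinery of Lemmas 2.1--2.3 in a $\beta$-twisted form. All the individual steps you sketch (the Schwarz inequality for $E$, the identification of $v_{k}v_{k}^{*}$ with the support of $c_{k}e$, and the trace identity via uniqueness of the tracial state) check out.
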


\begin{proof}
Since $u^{*}\gamma_{z}(u)\in (e\mathcal{F}_{n}e)'\cap e\mathcal{O}_{n}e
=e(\mathcal{F}_{n}'\cap \mathcal{O}_{n})e={\Bbb C}e
$, we have $\gamma_{z}(u)=z^{m}u$ for some integer $m$. 
We will show $m=0$. Suppose that $m>0$. 
Set $v=u{S_{1}^{*}}^{m}$. 
Then we have 
$\gamma_{z}(v)=v$ and 
hence $v\in \mathcal{F}_{n}$. Then we compute 
$v^{*}v=S_{1}^{m}e{S_{1}^{*}}^{m}=\varphi^{m}(e)S_{1}^{m}{S_{1}^{*}}^{m}$ 
and 
$
vv^{*}=uu^{*}=e
$. So we see that 
$
\tau(e)=\tau(vv^{*})=\tau(v^{*}v)=\tau(\varphi^{m}(e)S_{1}^{m}{S_{1}^{*}}^{m})
=\tau(e)\times \tau(S_{1}^{m}{S_{1}^{*}}^{m})=\frac{1}{n^{m}}\tau(e)
<\tau(e).
$ This is a contradiction. On the other hand, if $m<0$, we 
have $\gamma_{z}(u^{*})=z^{-m}u^{*}$. So by the same way we get a 
contradiction.

\end{proof}

\begin{lem}
Let $B$ be the abelian $C^{*}$-algebra 
generated by $e_{1},\dots,e_{l}$. 
Then we have 
$$
\mathcal{N}_{\mathcal{O}_{n}}(A)
\cap \mathcal{N}_{\mathcal{O}_{n}}(B)
\subset \mathcal{N}_{\mathcal{O}_{n}}
(e_{1}\mathcal{F}_{n}e_{1}
\oplus\cdots\oplus e_{k}\mathcal{F}_{n}e_{k})
.$$ 
\end{lem}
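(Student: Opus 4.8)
The plan is to take a unitary $U \in \mathcal{N}_{\mathcal{O}_{n}}(A) \cap \mathcal{N}_{\mathcal{O}_{n}}(B)$ and show directly that conjugation by $U$ preserves the algebra $C = e_{1}\mathcal{F}_{n}e_{1} \oplus \cdots \oplus e_{l}\mathcal{F}_{n}e_{l}$. Since $U$ normalizes $B$, conjugation by $U$ permutes the minimal projections $e_{1}, \dots, e_{l}$; say $U e_{i} U^{*} = e_{\sigma(i)}$ for some permutation $\sigma$. So the individual partial isometries $u_{i} := U e_{i}$ satisfy $u_{i}^{*} u_{i} = e_{i}$ and $u_{i} u_{i}^{*} = e_{\sigma(i)}$, and $\mathrm{Ad}\,U$ carries $e_{i}\mathcal{O}_{n}e_{i}$ onto $e_{\sigma(i)}\mathcal{O}_{n}e_{\sigma(i)}$.

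The key step is to show that $\mathrm{Ad}\,U$ carries the summand $e_{i}\mathcal{F}_{n}e_{i}$ onto $e_{\sigma(i)}\mathcal{F}_{n}e_{\sigma(i)}$, i.e.\ that $U$ is "gauge-compatible" on each block. I would argue via Lemma 2.10 (the $U^{*}\gamma_{z}(U)$ computation): since $U$ normalizes $A$ and hence $A' \cap \mathcal{O}_{n}$, and since $U$ also fixes each $e_{i}$ up to the permutation, one gets good control on $U^{*}\gamma_{z}(U)$. More concretely, I expect the cleanest route to run through Lemma 2.14: for each $i$, consider the partial isometry obtained by transporting $u_{i} = U e_{i}$ back to the $i$-th corner — roughly, pick an auxiliary partial isometry implementing the identification of $e_{\sigma(i)}\mathcal{F}_{n}e_{\sigma(i)}$ with $e_{i}\mathcal{F}_{n}e_{i}$ inside $\mathcal{F}_{n}$ (possible since these are corners of the UHF algebra by projections of equal trace, after replacing $U$ by $vU$ with $v \in A' \cap \mathcal{F}_{n}$ as in Proposition 2.12's proof if needed), and then apply Lemma 2.14 to conclude the composite lies in $\mathcal{F}_{n}$. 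Running the same argument for $U^{*}$ gives the reverse inclusion, so $\mathrm{Ad}\,U(C) \subseteq C$ and $\mathrm{Ad}\,U^{*}(C) \subseteq C$, i.e.\ $U \in \mathcal{N}_{\mathcal{O}_{n}}(C)$.

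The main obstacle, I expect, is handling the permutation $\sigma$ cleanly: if $\sigma$ is nontrivial one must make sure the "transport" partial isometries between corners $e_{i}\mathcal{F}_{n}e_{i}$ and $e_{\sigma(i)}\mathcal{F}_{n}e_{\sigma(i)}$ exist inside $\mathcal{F}_{n}$ and are chosen compatibly around the cycles of $\sigma$, so that Lemma 2.14 can be applied block by block without circularity. Once the bookkeeping of $\sigma$'s cycles is set up, the gauge-invariance argument is exactly the one in Lemma 2.14 (comparing traces of $vv^{*}$ and $v^{*}v$ forces the gauge-degree to vanish), so no genuinely new idea is needed beyond that reduction.
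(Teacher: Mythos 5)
Your setup is right: since $U$ normalizes $B$ it permutes the minimal projections, $Ue_{i}U^{*}=e_{\sigma(i)}$, and the whole lemma reduces to showing that ${\rm Ad}U$ carries $e_{i}\mathcal{F}_{n}e_{i}$ onto $e_{\sigma(i)}\mathcal{F}_{n}e_{\sigma(i)}$. But the way you execute that key step does not work. First, the ``transport'' partial isometries you want do not exist in general: a unitary of $\mathcal{O}_{n}$ does not preserve the trace of $\mathcal{F}_{n}$, so $\tau(e_{i})$ and $\tau(e_{\sigma(i)})$ can differ, and then there is no partial isometry in $\mathcal{F}_{n}$ between these projections. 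This happens in the paper's Example 2.2, where $\tau(e)=\tfrac12$ but $\tau(UeU^{*})=\tau(f)=\tfrac14$, and that $U$ does lie in $\mathcal{N}_{\mathcal{O}_{n}}(A)\cap\mathcal{N}_{\mathcal{O}_{n}}(B)$. Second, even where the transport exists, invoking the corner lemma (the one stating that a partial isometry $u$ with $u^{*}u=uu^{*}=e$ and $ue\mathcal{F}_{n}eu^{*}=e\mathcal{F}_{n}e$ lies in $\mathcal{F}_{n}$) is circular: its hypothesis $ue\mathcal{F}_{n}eu^{*}=e\mathcal{F}_{n}e$ is exactly the statement you are trying to establish. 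Third, the conclusion you aim for --- that the composite lies in $\mathcal{F}_{n}$ --- would give $Ue_{i}\in\mathcal{F}_{n}$ for every $i$ and hence $U\in\mathcal{F}_{n}$, which is strictly stronger than the lemma and false for the $U$ of Example 2.2. The trace argument that ``forces the gauge-degree to vanish'' is precisely the wrong tool here: the degrees $m_{i}$ need not vanish, and the lemma does not require them to.

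The correct engine is the homogeneity statement itself, which you gesture at but then abandon. Since $U$ normalizes $B$, one has $U^{*}\gamma_{z}(U)\in B'$, so the unitary group $\{U^{*}\gamma_{z}(U)u_{z}\}$ lies in $(A'\cap\mathcal{O}_{n})\cap B'$ and is therefore already diagonal with respect to the minimal projections $e_{i}$; this is the point of the paper's proof, which reruns Lemma 2.10 and Proposition 2.11 with $w=1$ (and, because $Ue_{i}U^{*}$ is already some $e_{j}$, with $v=1$). One obtains integers $m_{i}$ with $\gamma_{z}(Ue_{i})=z^{m_{i}}Ue_{i}$, whence for $x\in\mathcal{F}_{n}$ the element $Ue_{i}xe_{i}U^{*}$ is $\gamma$-invariant, hence lies in $\mathcal{F}_{n}$ and so in $e_{\sigma(i)}\mathcal{F}_{n}e_{\sigma(i)}$; the same computation for $U^{*}$ gives the reverse inclusion. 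No partial isometries in $\mathcal{F}_{n}$ and no trace comparison are needed, and none would be available.
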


\begin{proof}
The proof is essentially same as that of 
Lemma 2.10 and Proposition 2.11. 
For any $U\in \mathcal{N}_{\mathcal{O}_{n}}(A)
\cap \mathcal{N}_{\mathcal{O}_{n}}(B)$, 
since $U\in \mathcal{N}_{\mathcal{O}_{n}}(B)$, 
we have $U^{*}\gamma_{z}(U)\in B'$ and hence 
$U^{*}\gamma_{z}(U)u_{z}\in B'$. Therefore 
we can take $w=1$ in the proof of Lemma 2.10. 
Then since $U\in \mathcal{N}_{\mathcal{O}_{n}}(B)$, 
we have $Uwe_{i}w^{*}U^{*}=Ue_{i}U^{*}=e_{j}$ 
and hence we can take $v=1$ in the proof of Proposition 2.11. 
Thus by Proposition 2.11, we have 
$U\in \mathcal{N}_{\mathcal{O}_{n}}
(e_{1}\mathcal{F}_{n}e_{1}
\oplus\cdots\oplus e_{k}\mathcal{F}_{n}e_{k})$. 
\end{proof}

\bigskip

\begin{proof}[Proof of Theorem 2.2.] 
We can choose a finite family of unitaries 
$U_{1},\dots,U_{N}\in \mathcal{N}_{\mathcal{O}_{n}}(A)
\cap \mathcal{N}_{\mathcal{O}_{n}}(B)\subset \mathcal{N}_{\mathcal{O}_{n}}
(e_{1}\mathcal{F}_{n}e_{1}
\oplus\cdots\oplus e_{k}\mathcal{F}_{n}e_{k})$ satisfying the following. 
For any $V\in \mathcal{N}_{\mathcal{O}_{n}}(A)
\cap \mathcal{N}_{\mathcal{O}_{n}}(B)$, there exists $U_{i}$ such that 
${\rm Ad}V={\rm Ad}U_{i}$ on $B$. 

For any $U\in \mathcal{N}_{\mathcal{O}_{n}}(A)$, by Proposition 2.11 
there exist unitaries $v\in A'\cap \mathcal{F}_{n}$ 
and $w\in A'\cap \mathcal{O}_{n}$ satisfying 
$vUw\in \mathcal{N}_{\mathcal{O}_{n}}
(e_{1}\mathcal{F}_{n}e_{1}
\oplus\cdots\oplus e_{k}\mathcal{F}_{n}e_{k})$. Then since 
$vUw\in \mathcal{N}_{\mathcal{O}_{n}}(A)
\cap \mathcal{N}_{\mathcal{O}_{n}}(B)$, 
we can take $U_{i}$ satisfying 
${\rm Ad}U_{i}^{*}vUw={\rm id}$ on $B$. Combining this with 
the fact that $U_{i}\in \mathcal{N}_{\mathcal{O}_{n}}
(e_{1}\mathcal{F}_{n}e_{1}
\oplus\cdots\oplus e_{k}\mathcal{F}_{n}e_{k})$ we see that 
$U_{i}^{*}vUwe_{j}\in \mathcal{N}_{e_{j}\mathcal{O}_{n}e_{j}}
(e_{j}\mathcal{F}_{n}e_{j})\subset \mathcal{F}_{n}$ 
and hence $U_{i}^{*}vUw\in \mathcal{N}_{\mathcal{F}_{n}}(A)$. 
Here we used Lemma 2.12. 
Therefore we see that ${\rm Ad}U|_{A}={\rm Ad}vUw|_{A}
\in ({\rm Ad}U_{i}|_{A})H
$. This implies that the index $[G:H]$ is finite. 

\end{proof} 

\bigskip

\begin{ex}
Let $e$ be a projection in 
$\mathcal{F}_{n}$. 
Consider the $C^{*}$-algebra 
$
A=e\mathcal{F}_{n}e\oplus(1-e)\mathcal{F}_{n}(1-e)
$. Here we remark that $A'\cap \mathcal{F}_{n}={\Bbb C}e
\oplus{\Bbb C}(1-e).$ 
We will show that 
$
\mathcal{N}_{\mathcal{O}_{n}}(A)\subset \mathcal{F}_{n}
$ and hence $G=H$. This can be shown by K-theoretic 
argument as follows. 

For any $U\in \mathcal{N}_{\mathcal{O}_{n}}(A)$, 
if $UeU^{*}=e$, it follows from Lemma 2.12 that 
$U\in \mathcal{F}_{n}$. So we consider the case 
$UeU^{*}=1-e$. 
Since $U^{*}\gamma_{z}(U)e\in (e\mathcal{F}_{n}e)'\cap e{O}_{n}e
=e(\mathcal{F}_{n}'\cap {O}_{n})e={\Bbb C}e
$, we have $\gamma_{z}(U)e=z^{m}Ue$ for some integer $m$. 
We will show $m=0$. Suppose that $m>0$. 
Set $v=Ue{S_{1}^{*}}^{m}$. 
Then we have 
$\gamma_{z}(v)=v$ and 
hence $v\in \mathcal{F}_{n}$. Then we compute 
$v^{*}v=S_{1}^{m}e{S_{1}^{*}}^{m}=\varphi^{m}(e)S_{1}^{m}{S_{1}^{*}}^{m}$ 
and 
$
vv^{*}=UeU^{*}=1-e
$. So we see that 
$
1-\tau(e)=
\tau(vv^{*})=\tau(v^{*}v)=\tau(\varphi^{m}(e)S_{1}^{m}{S_{1}^{*}}^{m})
=\tau(e)\times \tau(S_{1}^{m}{S_{1}^{*}}^{m})=\frac{1}{n^{m}}\tau(e)
$. Since $\mathcal{F}_{n}$ is the UHF-algebra of type $n^{\infty}$, 
we can write $\tau(e)=\frac{q}{n^{p}}$. So we get 
$
1-\frac{q}{n^{p}}=\frac{1}{n^{m}}\frac{q}{n^{p}}
$ and hence 
$$
n^{m+p}=q(1+n^{m}).
$$
This is impossible. Indeed, consider the prime factorization 
$n=p_{1}^{k_{1}}\times \cdots\times p_{n}^{k_{n}}$. Then we have 
$$
(p_{1}^{k_{1}}\times \cdots\times p_{n}^{k_{n}})^{m+p}=
q(1+(p_{1}^{k_{1}}\times \cdots\times p_{n}^{k_{n}})^{m}).
$$ 
Therefore we must have 
$$
1+(p_{1}^{k_{1}}\times \cdots\times p_{n}^{k_{n}})^{m}
=p_{1}^{l_{1}}\times \cdots\times p_{n}^{l_{n}}.
$$
However this cannot occur because the left hand side has 
the remainder $1$ when dividing by $p_{1}$. 
\end{ex}

\begin{ex}
We can write 
$$
\mathcal{O}_{2}\supset\mathcal{F}_{2}=
M_{2}({\Bbb C})\otimes M_{2}({\Bbb C})\otimes\cdots.
$$
Consider two projections 
$$
e=
\begin{pmatrix}
1&0\\
0&0
\end{pmatrix}
\otimes 1\otimes 1\otimes\cdots
$$
and 
$$
f=
\begin{pmatrix}
0&0\\
0&1
\end{pmatrix}
\otimes 
\begin{pmatrix}
1&0\\
0&0
\end{pmatrix}
\otimes 1\otimes\cdots.
$$
Since $\varphi(e)S_{1}{S_{1}}^{*}\in M_{2}({\Bbb C})\otimes M_{2}({\Bbb C})$ 
and 
$\tau(\varphi(e)S_{1}{S_{1}}^{*})=\frac{1}{4}$, 
there exists a partial isometry 
$v\in M_{2}({\Bbb C})\otimes M_{2}({\Bbb C})$ such that 
$v^{*}v=\varphi(e)S_{1}{S_{1}}^{*}$ and 
$vv^{*}=f$. We set 
$$
U=vS_{1}+(vS_{1})^{*}+(1-e-f).
$$
Then it is easy to see that 
$$
U\in \mathcal{N}_{\mathcal{O}_{n}}
(e\mathcal{F}_{n}e
\oplus f\mathcal{F}_{n}f
\oplus (1-e-f)\mathcal{F}_{n}(1-e-f)
).
$$
We let 
$
A=e\mathcal{F}_{n}e
\oplus f\mathcal{F}_{n}f
\oplus (1-e-f)\mathcal{F}_{n}(1-e-f)
$. 
Since 
$\tau(UeU^{*})=\tau(f)=\frac{1}{4}\not=\frac{1}{2}=\tau(e)$, 
we have 
$$
{\rm Ad}U|_{A}\not\in 
\{{\rm Ad}u|_{A}\ \ |\ u\in \mathcal{N}_{\mathcal{F}_{n}}(A)\}.
$$
Therefore we see that $G\not=H$. 
\end{ex}

\begin{rem}
If $A$ is of the form 
$A=e_{1}\mathcal{F}_{n}e_{1}
\oplus\cdots\oplus e_{l}\mathcal{F}_{n}e_{l}$, 
then we have 
$
A'\cap \mathcal{F}_{n}=A'\cap \mathcal{O}_{n}
={\Bbb C}e_{1}\oplus\cdots\oplus {\Bbb C}e_{l}
$. On the other hand, in Remark 2.1 we see that 
the Bratteli diagram of 
the inclusion 
$A'\cap \mathcal{F}_{n}\subset A'\cap \mathcal{O}_{n}$ has a 
special form. So we might expect that 
$A'\cap \mathcal{F}_{n}= A'\cap \mathcal{O}_{n}$. However 
this is wrong in general. Indeed there exists a $C^{*}$-subalgebra 
$A\subset \mathcal{F}_{n}$ with finite index such that 
$A'\cap \mathcal{F}_{n}\not= A'\cap \mathcal{O}_{n}$. 
We can take $A=\lambda_{u}(\mathcal{F}_{n})$ where 
$\lambda_{u}$ is a localized endomorphism. 
See~\cite{CHS,CP}.

\end{rem}

\end{document}